\patchcmd{\ttlh@hang}{\parindent\z@}{\parindent\z@\leavevmode}{}{}
\patchcmd{\ttlh@hang}{\noindent}{}{}{}
\numberwithin{equation}{section}
\newtheorem{lem}{Lemma}[section]
\newtheorem{prop}[lem]{Proposition}
  \newcommand{\R}{\mathbb{R}}
  \newcommand{\N}{\mathbb{N}}
  \newcommand{\T}{\mathbb{T}}
  \newcommand{\J}{\mathbf{J}}
\newcommand{\md}{\mathrm{d}}
\newcommand{\F}{\mathcal{F}}
\newcommand{\K}{\mathcal{K}}
\newcommand{\mL}{\mathcal{L}}
\newcommand{\mC}{\mathcal{C}}
\newcommand{\mS}{\mathcal{S}}
\newcommand{\mI}{\mathcal{I}}
\newcommand{\mk}{\mathbf{k}}
\newcommand{\mj}{j}
\newcommand{\ve}{\varepsilon}
\newcommand{\mz}{\mathbf{z}}
\newcommand{\mx}{\mathbf{x}}
\newcommand{\mh}{\mathbf{h}}
\newcommand{\my}{\mathbf{y}}
\newcommand{\V}{\mathcal{V}}
\newcommand{\W}{\mathcal{W}}
\newcommand{\ds}{\displaystyle}
\newcommand{\sinc}{\text{Sinc}}
\makeatletter\@addtoreset{figure}{section}\makeatother
\makeatletter \@addtoreset{equation}{section} \makeatother
\definecolor{darkblue}{rgb}{0.2,0.0,0.9}
\newcommand{\B}{\mathcal{B}}
\title{Asymptotic preserving schemes for the FitzHugh-Nagumo transport equation with strong local interactions}
\author[1]{Joachim Crevat\footnote{\texttt{joachim.crevat@math.univ-toulouse.fr}}}
\author[1,2]{Francis Filbet\footnote{\texttt{francis.filbet@math.univ-toulouse.fr}}}
\affil[1]{Institut de Math\'ematiques de Toulouse ; UMR5219, Universit\'e de Toulouse ; UPS IMT, F-31062 Toulouse Cedex 9 France}
\affil[2]{Institut Universitaire de France}
\begin{document}
\maketitle

\begin{abstract}
  This paper is devoted to the numerical approximation of the
  spatially extended FitzHugh-Nagumo transport equation with strong
  local interactions based on  a particle method. In this regime, the
  time step can be subject to stability constraints related to the
  interaction kernel. To avoid this limitation, our approach is based
  on  higher-order  implicit-explicit numerical schemes. Thus, when
  the magnitude of the interactions becomes large,  this method
  provides a consistent discretization of the macroscopic
  reaction-diffusion  FitzHugh-Nagumo system.  We carry out some
  theoretical proofs and perform several numerical experiments that
  establish a solid validation of the method and its underlying
  concepts.
%Include keywords and mathematical subject classification numbers as needed.
%\keywords{Particle methods  \and Spectral methods \and Vlasov-like equations}
% \PACS{PACS code1 \and PACS code2 \and more}
%\subclass{AMS 65M75 \and 35K57 \and 35Q92}
\end{abstract}

%Include keywords and mathematical subject classification numbers as needed.
{\bf Key words : }{Particle methods  \and Spectral methods \and Vlasov-like equations}
% \PACS{PACS code1 \and PACS code2 \and more}
% \subclass
{AMS 65M75 \and 35K57 \and 35Q92}

\section{Introduction}
\setcounter{equation}{0}

%%%%%%%%%%%%% EQUATIONS CONTINUES   %%%%%%%%%%%%%%%%%%

The FitzHugh–Nagumo (FHN) system \cite{FIT}, \cite{NAG}, models the pulse transmission in animal nerve axons and allows to describe complicated interactions of neurons in large neural networks. More precisely, we consider a  network composed of $n\in\mathbb{N}$ neurons interacting with each other, where each one is labeled by $i\in\{1,...,n\}$, and endowed with a parameter $\mx_i\in \R^d$ for $d\in\{1,2,3\}$ standing for the constant spatial position in the network. The FHN system accounts for the variations of the membrane potential $v_i$ of a neuron coupled to an auxiliary variable $w_i$ called the adaptation variable. It can be written as follows for all $i\in\{1,...,n\}$,
\begin{equation}\label{eq:FHN}
\left\{\begin{array}{l}
\dfrac{\md v_i }{\md t}\,=\,N(v_i) - w_i + \dfrac{1}{n\,\ve^2}\ds\underset{j=1}{\overset{n}{\sum}}\,\Psi_\ve(\|\mx_i-\mx_j\|)\,(v_j-v_i), \\ \, \\
\dfrac{\md w_i}{\md t} \,=\, \tau\,(v_i-\gamma\,w_i),
\end{array}\right.
\end{equation}
where $\tau \geq 0$ and $\gamma > 0$ are given constants,  $N(v)= v\,(1-v)\,(v-\theta)$ with $\theta\in(0,1)$ a fixed parameter whereas
$\ve>0$ is a scaling small parameter describing the intensity of  local interactions between neurons. For all $\ve>0$ and for all $\mx\in\R^d$, the  connectivity kernel $\Psi$ only depends on the relative distance between neurons and is given by 
$$
\Psi_\ve(\|\my\|)\,:=\,\dfrac{1}{\ve^d}\,\Psi\left(
  \dfrac{\|\my\|}{\ve} \right), \quad \my\in\R^d,
$$
where $\Psi:\R^+\rightarrow\R^+$. This scaling with respect to $\ve$
means that when $\ve$ goes to zero, space interactions are highly dominated by local ones
compared to long range correlations.  In the rest of this article, we assume that the  connectivity kernel $\Psi$ is nonnegative and rapidly vanishing at infinity, hence we introduce the following quantities,
\begin{equation}\label{hyp:psi}
  \left\{
    \begin{array}{l}
    \ds  \overline{\Psi}\,:=\,\ds\int_{\R^d}\Psi(\|\my\|)\,\md\my \,>\,0, \\[0.9em]
      \ds\overline{\sigma}\,:=\,\ds\frac{1}{2}\int_{\R^d}\Psi(\|\my\|)\,\|\my\|^2\md\my \,>\,0,
\end{array}\right.
      \end{equation}
which will play an important role later. A typical example for $\Psi$ is a Gaussian function, or the indicator function in a compact set.

In \cite{CREmf}, we proved that as the number of neurons $n$ goes to
infinity and for $\Psi \in \text{Lip}_b(\R^+)$, the set of neurons at time $t>0$ and position $\mx\in\R^d$  can be described  by a distribution function $f^\ve(t,\mx,.)$ solution of a mean-field equation,
\begin{equation}
  \label{eq:kin-1}
\left\{\begin{array}{l}
         \partial_t f^\ve \,+\,\partial_v\left[ f^\ve\,\left( N(v) \,-\, w \,+\,\K_\ve[f^\ve] \right) \right] \,+\,\partial_w\left[ f^\ve\,A(v,w) \right] \,=\,0,
         \\ \, \\
         f^\ve(t=0,\mx,.) \,=\,  f_0^\ve(\mx,.),  
\end{array}\right.
\end{equation}
with  $\K_\ve[f^\ve]$ and $A$ given by
         \begin{equation}
  \label{eq:kin-2}
\left\{\begin{array}{l}
         \K_\ve[f^\ve](t,\mx,v)
         \,=\,\dfrac{1}{\ve^2}\ds\int_{\R^d}\int_{\R^2}\Psi_\ve\left(
         \|\mx-\mx'\|\right)\,(v'-v)\,f^\ve(t,\mx',\md v',\md w') \md\mx',
         \\ \, \\
A(v,w) \,=\, \tau\,\left( v \,-\, \gamma\,w \right).
\end{array}\right.
\end{equation}
Here, we want to  construct numerical solutions of \eqref{eq:kin-1}--\eqref{eq:kin-2} using particle methods, which consist in approximating the distribution function by a finite number of macro-particles. The trajectories of these particles are determined from the characteristic curves corresponding to the \eqref{eq:kin-1}. Indeed, for any initial data $f_0^\ve$  with finite second moments in $\mx\in\R^d$ and $(v,w)\in\R^{2}$, the solution of \eqref{eq:kin-1}--\eqref{eq:kin-2} is uniquely determined as the push-forward of $f_0^\ve$ by the flow of the characteristic system of equations associated to \eqref{eq:kin-1}--\eqref{eq:kin-2}, which can be written for $(t,\mx)\in\R^+\times\R^d$ and $(v,w)\in\R^2$ as
\begin{equation}
  \label{eq:char}
\left\{\begin{array}{l}
         \dfrac{\md \V^\ve }{\md t} \,=\, N\left(\V^\ve\right) \,-\, \W^\ve \,+\, \K_\ve[f^\ve]\left(t,\mx, \V^\ve\right), 
\\ \, \\
         \dfrac{\md \W^\ve}{\md t} \,=\, A\left( \V^\ve,\W^\ve \right) ,
\\ \, \\
\V^\ve(0) \,=\, v , \quad \W^\ve(0) \,=\, w.
\end{array}\right.
\end{equation}
Then  we denote by $\Phi_{t,\mx}$ the  flow $(v,w)\in\R^{2}
\mapsto \Phi_{t,\mx}(v,w) \,=\, (\V^\ve,\W^\ve)(t,\mx,v,w)$, with  $ \Phi_{t,\mx}(v,w) \in\R^2$, hence the solution {of} \eqref{eq:kin-1}--\eqref{eq:kin-2} is given by 
\begin{equation}
  \label{f:01}
f^\ve(t,\mx,.) \,\,=\,\,\Phi_{t,\mx}\#  f_0^\ve(\mx,.),
\end{equation}
{ where $\#$ is our notation for a push-forward,} that is, for any test-function $\varphi$ and $B\subset \R^2$,
$$
\int_B \varphi(v,w) \, f^\ve(t,\mx,\md v,\md w)
\,=\,\int_{\Phi_{t,\mx}^{-1}(B)} \varphi\circ \Phi_{t,\mx} \,\,
f_0^\ve(\mx,\md v,\md w).
 $$
We also define for all $(t,\mx)\in \R^+\times\R^d$ and $\ve>0$ the following macroscopic quantities,
\begin{equation}\label{def:macro}
\rho^\ve\,\begin{pmatrix} 1 \\ V^\ve \\ W^\ve\end{pmatrix}(t,\mx) \,:=\, \ds\int_{\R^2} \begin{pmatrix} 1 \\ v \\ w \end{pmatrix} \,f^\ve(t,\mx,\md v,\md w),
\end{equation}
so that $\rho^\ve(t,\mx)$ is the average neuron density in the network
at time $t$ and location $\mx$, and $(V^\ve,W^\ve)$ is the average
pair membrane potential - adaptation variable. Therefore, we observe
that $\K_\ve[f^\ve]$ may be written with respect to the macroscopic
quantities $\rho^\ve$ and $\rho^\ve\,V^\ve$ as
\begin{equation}
  \label{def:Kf}
\K_\ve[f^\ve](.,v)  =  \frac{1}{\ve^2}\,\left[
  \Psi_\ve\star(\rho^\ve V^\ve) \,-\,   \Psi_\ve\star\rho^\ve \, v\,\right]\,,
\end{equation}
where $\star$ denotes the standard convolution product in $\mx$.

{ An important issue in the numerical simulation of
  \eqref{eq:char} is that when the parameter $\ve$
  is small, the numerical error of a  classical time explicit scheme
  may become large. For instance with an explicit Euler scheme,  the
  error may behave as $O(\frac{\Delta t}{\ve^2})$, where $\Delta t$ is
  the time step, hence the scheme is
  not appropriate for $\ve \ll 1$. Here we want to design a numerical
  scheme which is less sensitive to this parameter $\ve>0$ in order to
  keep a control on the numerical error when $\ve \ll 1$.}

Before describing and analyzing a class of numerical methods for
\eqref{eq:kin-1}--\eqref{eq:kin-2}  in the presence of strong local
space interactions ($\ve\ll 1$), we first briefly expound what may be
expected from the continuous model in the limit $\ve\to 0$.

On the one hand, by integrating \eqref{eq:kin-1}  with respect to $(v,w)\in\R^2$, we observe that for all $t\geq 0$
$$
\rho^\ve(t,\mx) = \rho_0^\ve(\mx), \quad \mx\in\R^d
$$
and moreover we suppose that  it does not depend neither on $\ve$, so that $\rho^\ve(t,.)=\rho_0$ with
\begin{equation}
  \label{hyp:rho}
\rho_0\,\geq\,0,\quad\rho_0\in L^\infty(\R^d).
\end{equation}
On the other hand,  using (\ref{def:Kf}), we observe that
\begin{equation}
  \label{K2}
\int_{\R^2}  \K_\ve[f^\ve](t,\mx,v) \,f^\ve(t,\mx,\md v\,\md w)  \,=\, \frac{\rho_0}{\ve^2} \,\left[\, \Psi_\ve \star (\rho_0\,V^\ve) \,-\, (\Psi_\ve\star\rho_0)\, V^\ve\, \right]\,. 
\end{equation}
Hence,  multiplying \eqref{eq:kin-1}  by $v$ (resp. $w$) and
integrating with respect to $(v,w)\in\R^2$ and using \eqref{K2}, we get a time evolution equation for the macroscopic quantities $(\rho_0 V^\ve,\, \rho_0 W^\ve)$ as
\begin{equation}
  \label{eq:macroeps}
\left\{\begin{array}{l}
\ds        \partial_t (\rho_0V^\ve) -\frac{\rho_0}{\ve^2} \left[
         \Psi_\ve \star (\rho_0V^\ve) -
         (\Psi_\ve\star\rho_0) V^\ve \right] =
         \int_{\R^2}N(v) f^\ve(.,\md v,\md w) - \rho_0W^\ve,
 \\ \, \\
\partial_t (\rho_0\,W^\ve) =\rho_0A\left( V^\ve,W^\ve\right).
\end{array}\right.
\end{equation}
Of course, this system is not closed since the right hand side of the equation on
$\rho_0\,V^\ve$ again depends on the distribution function
$f^\ve$. However, in the regime of strong local interactions \cite{CRE}, that is,
in the limit $\ve\rightarrow 0$,  the singular term in $\ve^{-2}$
indicates that the distribution function $f^\ve$ converges towards a
Dirac distribution in $v$ centered in $V^\ve$. Then applying a Taylor
expansion of the solution $V^\ve$,  the right hand side of
\eqref{eq:macroeps}  gives rise to a diffusive operator for the
spatial interactions at zeroth order with respect to $\ve$. It yields that $(\rho_0\,V^\ve,\rho_0\,W^\ve)$ converges towards a limit pair $(\rho_0\,V,\rho_0\,W)$ satisfying the FHN reaction-diffusion system,
\begin{equation}\label{eq:macro}
\left\{\begin{array}{l}
         \rho_0\,\left( \,\partial_t V \,-\overline{\sigma}\left[\Delta\left( \rho_0\,V \right) \,-\, V \Delta\rho_0 \right]\,-\,N(V) \,+\, W\right) \,=\, 0,
         \\ \, \\
\rho_0\,\left( \,\partial_t W \,-\, A(V,W) \,\right) \,=\, 0,
\end{array}\right.
\end{equation}
where $\overline{\sigma}$ is defined in \eqref{hyp:psi}.  We refer to \cite{CRE}
for more details on this asymptotic analysis.

%%%%%%%%%%%%%%%%%%%%%%%%%%MAIN PURPOSE%%%%%%%%%%%%%%%%%%%%%%%%

We now come to our main concern in the present article and seek after
a numerical method that is able to capture these expected asymptotic
properties, even when numerical discretization parameters are kept
independent of $\ve$ hence are not adapted to the stiffness degree of
the space interactions. Our objective enters in the general framework of so-called Asymptotic Preserving (AP) schemes, first introduced and widely studied for dissipative systems as in \cite{JIN}, \cite{KLA}. Yet, in opposition with collisional kinetic equations in hydrodynamic or
diffusion limits, transport equations like \eqref{eq:kin-1}
involve of course some stiffness in time but it is also crucial to
take care of the space discretization in order to capture the
correction terms of  the non-local operator
$\K_\ve[f^\ve]$. By many respects this makes the identification of suitable schemes much more challenging. 

{
One of the interest of the study of AP schemes is to numerically determine a rate of convergence of the transport equation \eqref{eq:kin-1} as the parameter $\ve$ goes to $0$. Thus, we can compare this numerical rate of convergence with what we derived in the continuous framework in \cite{CRE}.
}

In \cite{CREmf}, the author proposed a numerical approximation to \eqref{eq:kin-1}--\eqref{eq:kin-2} using a standard particle
method. However, as the parameter $\ve$ goes to $0$, that is when the
range of interactions between neurons shrinks and their amplitude
grows, the time step and spatial grid size have to tend to zero too,
hence the scheme cannot be consistent with the limit system
\eqref{eq:macro} in the limit $\ve\rightarrow 0$. In a
different context \cite{FIL},  \cite{FIL2},  F. Filbet \& L. M. Rodrigues developed a
particle method for  the Vlasov-Poisson system with a strong external
magnetic field, which is able  to capture accurately the
non stiff part of the evolution while allowing for coarse
discretization parameters. 

Here, we show how this approach may be extended to transport equations
like \eqref{eq:kin-1} to deal with the time discretization. However,
it is not sufficient since an appropriate space discretization
technique is mandatory to capture the diffusive operator in
\eqref{eq:macro} in the limit $\ve\rightarrow 0$. In \cite{BUE}, the
authors apply a spectral collocation method to provide numerical
approximations of reaction-diffusion equations, with fractional
spatial diffusion. Their method obviously can also be applied for
local diffusions as in the FitzHugh-Nagumo reaction-diffusion system
\eqref{eq:macro}. On the other hand, the spectral collocation method
also provides numerical approximations of differential equations with
integral terms. For example, in \cite{PAR}, \cite{FR}, \cite{FPM},
\cite{FHJ} and \cite{FPR},  the authors use 
fast spectral methods for the non-local  Boltzmann operator, which lead to compute the time evolution of Fourier
coefficients of the solution instead of the solution
itself. Therefore, this approach considerably simplifies the
computation of the integral collision term and may be applied in our
context.  Moreover, we will show that a suitable formulation allows to perform a Taylor expansion of the solution in the Fourier
space and to recover a consistent discretization of the macroscopic
system \eqref{eq:macro} in the limit $\ve \rightarrow 0$, which
guarantee the asymptotic preserving property. Finally, another difficulty in our framework is to prove the convergence when
$\ve$ vanishes of the nonlinear term in \eqref{eq:char} involving the
cubic function $N$. The idea to circumvent this
issue is to use, as in the continuous framework \cite{CRE}, the stiff term in \eqref{eq:char}, which stands for
the interactions between neurons throughout the network to prove that the solution $f^\ve$ converges towards a Dirac mass in $v$, that is
all the membrane potential of the neurons at position $\mx$ are
synchronized. Thus, it is possible to identify the asymptotic  of the
nonlinear term in \eqref{eq:char}. We will show that the particle
approximation of the distribution in $(v,w)\in\R^2$ is particularly
well suited to achieve this.

%%%%%%%%%%%%%OUTLINE%%%%%%%%%%%%%%%%%%%%%%%%

The rest of the paper is organized as follows. In Section \ref{sec:particle}, we present the   particle method for
the transport  equation \eqref{eq:kin-1}--\eqref{eq:kin-2} and propose
an appropriante time discretization technique in order to preserve the
correct asymptotic when $\ve \ll 1$. Then, we provide first and second order
schemes and verify the consistency when $\ve$ tends to zero. Finally, in Section \ref{sec:num}, we present
some numerical simulations to illustrate our results, and to study the
dynamics of \eqref{eq:kin-1}--\eqref{eq:kin-2} with different different sets of parameters and different heterogeneous neuron densities.

\section{A numerical scheme for the FitzHugh-Nagumo transport
  equation}
\label{sec:particle}
\setcounter{equation}{0}

This section is devoted to the construction of the numerical schemes
for  \eqref{eq:kin-1}- \eqref{eq:kin-2}. We first focus on the
discretization of the nonlocal operator $\K_\ve[f^\ve]$ in \eqref{eq:kin-2},
for which we propose a spectral collocation method based on the
discrete fast Fourier method. Then, we treat the transport equation
\eqref{eq:kin-1} using a particle method for the microscopic variable
$(v,w)\in\R^2$ and provide  first and second order  semi-implicit
schemes for the time discretization. This algorithm is constructed in
order to get a consistent approximation in the limit $\ve\rightarrow 0$.

For sake of clarity, we drop the dependence with respect to $\ve$ on
the distribution function $f^\ve$ and on the non-local operator $\K_\ve[f^\ve]$.

\subsection{Computation of the Non-local operator}
We first look for an  approximation of the  operator $\K[f]$ given
in \eqref{eq:kin-2}. In view
  of applying a Fourier spectral method in space, we write
  $\K[f]$ as
  \begin{eqnarray*}
\K[f](t, \mx, v)  &=& \ds \frac{1}{\ve^2}\,\int_{\R^d}
  \Psi_\ve(\|\my\|)\,\rho_0(\mx-\my)\,\left( V(t,\mx-\my) -
                          v\right)\,\md\my\,.
  \end{eqnarray*}
  Then we define a truncated operator $\K^S[f]$  in the following way.
\begin{lem}
Suppose that ${\rm Supp}\left(\rho_0\right)\subset \mathcal{B}(0,S)$, where
$\mathcal{B}(0,S)$ is the ball of radius $S>0$ centered at the
origin and choose $\ve\in (0,1)$. Then, for any $(t,\mx, v,w)\in\R^+\times {\mathcal
  B}(0,S)\times \R^2$, $f$ is solution {of}
$$
\partial_t f\,+\,\partial_v\left[ f\,\left( N(v) \,-\, w
       \,+\,\K^S[f] \right) \right] \,+\,\partial_w\left[
     f\,A(v,w) \right] \,=\,0,
$$
where for any $(\mx, v)\in {\mathcal B}(0,S)\times \R$,
\begin{equation}
   \label{eq:trun-1}
\K^S[f](t,\mx , v)  = \frac{\chi_{{\mathcal B}(0,S)}}{\ve^2}(\mx)\,\int_{{\mathcal B}(0,2S)} \Psi_\ve(\|\my\|)\,\rho_0(\mx-\my)\,\left( V(t,\mx-\my) - v\right)\,{\md\my},
\end{equation}
where $\chi_{{\mathcal B}(0,S)}$ denotes the characteristic function in the ball ${\mathcal B}(0,S)$.
\end{lem}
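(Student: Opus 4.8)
The plan is to exploit the conservation of the spatial density together with the support hypothesis on $\rho_0$. First I would recall that integrating \eqref{eq:kin-1} in $(v,w)\in\R^2$ gives $\rho^\ve(t,\cdot)=\rho_0^\ve=\rho_0$ for all $t\geq 0$, so that by \eqref{hyp:rho} and $\Supp(\rho_0)\subset\mathcal{B}(0,S)$ the density $\rho_0$ vanishes outside $\mathcal{B}(0,S)$. Since $f(t,\mx,\cdot)$ is a nonnegative measure whose total mass is $\rho^\ve(t,\mx)=\rho_0(\mx)$, this forces $f(t,\mx,\cdot)\equiv 0$ whenever $\mx\notin\mathcal{B}(0,S)$, and in particular the first moment $(\rho_0 V)(t,\cdot)$ defined in \eqref{def:macro} is supported in $\mathcal{B}(0,S)$ as well.

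Next, I would fix $\mx\in\mathcal{B}(0,S)$ and start from the rewriting of $\K[f]$ recalled just before the statement,
\begin{equation*}
\K[f](t,\mx,v)=\frac{1}{\ve^2}\int_{\R^d}\Psi_\ve(\|\my\|)\,\rho_0(\mx-\my)\,\bigl(V(t,\mx-\my)-v\bigr)\,\md\my,
\end{equation*}
and observe that, viewed as a function of $\my$, the integrand is supported in $\{\my:\ \rho_0(\mx-\my)\neq 0\}\subset\{\my:\ \|\mx-\my\|\leq S\}$; the triangle inequality $\|\my\|\leq\|\mx\|+\|\mx-\my\|\leq 2S$ then allows me to restrict the integral to $\mathcal{B}(0,2S)$ without altering its value. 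Since $\chi_{\mathcal{B}(0,S)}(\mx)=1$, the result is precisely $\K^S[f](t,\mx,v)$ as in \eqref{eq:trun-1}, i.e.\ $\K[f]=\K^S[f]$ on $\mathcal{B}(0,S)\times\R$. Substituting this identity into the transport equation \eqref{eq:kin-1}--\eqref{eq:kin-2} satisfied by $f$ yields the announced equation on $\R^+\times\mathcal{B}(0,S)\times\R^2$; outside $\mathcal{B}(0,S)$ there is nothing to prove, $f$ being identically zero there.

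I do not expect a serious obstacle: the core of the argument is the elementary support computation above. The single point I would state with care is the vanishing of $f(t,\mx,\cdot)$ off $\Supp(\rho_0)$, which relies on the conservation of $\rho^\ve$ and the nonnegativity of $f$. It is also worth emphasizing that the cutoff radius $2S$ is produced by the factor $\rho_0$, not by any decay of $\Psi_\ve$, which is why it is independent of $\ve\in(0,1)$ even though $\Psi$ need not be compactly supported.
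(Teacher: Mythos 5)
Your proof is correct and follows essentially the same route as the paper: use the conservation of the density and the support of $\rho_0$ to reduce to $\mx\in\mathcal{B}(0,S)$, then the triangle inequality $\|\my\|\leq\|\mx\|+\|\mx-\my\|\leq 2S$ to truncate the integral, so that $\ve^2\K^S[f]=\chi_{\mathcal{B}(0,S)}\K[f]$. Your explicit remark that $f(t,\mx,\cdot)$ vanishes off $\mathcal{B}(0,S)$ by nonnegativity and mass conservation is a slightly more detailed justification of the same step the paper performs when inserting $\chi_{\mathcal{B}(0,S)}$ into the transport equation.
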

\begin{proof}
On the one hand, since ${\rm Supp}\left(\rho_0\right)\subset\mathcal{B}(0,S)$ and for all $t\geq
0$, the density $\rho(t)=\rho_0$, we get that for any $\mx\in\R^d$, the transport equation \eqref{eq:kin-1} can be written as 
  $$
   \partial_t f \,+\,\partial_v\left[ f\,\left( N(v) \,-\, w
       \,+\,\chi_{{\mathcal B}(0,S)}\,\K[f] \right) \right] \,+\,\partial_w\left[
     f\,A(v,w) \right] \,=\,0\,.
   $$
    Then it is enough to consider only
  $\mx\in {\mathcal B}(0,S)$. On the other hand,  the domain of integration of the
operator $\K[f]$ is such that
$$
\|\my\| \,\leq\, \|\mx\| \,+\, \| \my-\mx\| \,\leq \, 2\,S,
$$
 hence for any $(t,\mx,v) \in \R^+\times{\mathcal B}(0,S)\times \R$,
 $$
\K[f](t,\mx,v) \,=\, \frac{1}{\ve^2}\,\int_{{\mathcal B}(0,2S)}
\Psi_\ve(\|\my\|)\,\rho_0(\mx-\my)\,\left( V(t,\mx-\my) -
  v\right)\,\md \my.
 $$
Thus, we define the truncated operator \eqref{eq:trun-1} as $\ve^2\,\K^S[f]  = \chi_{B(0,S)}\,\K[f]$.
\end{proof}

Actually the operator  $\K^S[f]$ can be
seen as convolution products between $(\rho_0,\rho_0 V)$ and the
connectivity kernel $\Psi_\ve$, that is,
$$
\K^S[f](t,\mx,v) \,=\, \frac{1}{\ve^2}\,\left(\mL^S[\rho_0V](t,\mx)  \,-\, v\,\mL^S[\rho_0](\mx) \right),
$$
where $\mL^S$ is given by
\begin{equation}
  \label{Lu}
\mL^S[u] \,=\,  \Psi_\ve\star u, \quad u\in\{\rho_0, \,\rho_0V\}. 
\end{equation}
In the sequel, we choose for simplicity $S=\pi/2$ such that
$\mathcal{B}(0,S)\subset \T:=[-\pi,\pi]^d$, and consider a set of equidistant points
$(\mx_\mj)_{\mj\in \J_{n_x}} \subset \T$ with $\J_{n_x}:=\llbracket -n_x/2 ,
n_x/2-1\rrbracket^d$ where $n_x$ is an even integer.  An efficient strategy to
approximate this nonlocal term is the spectral or spectral collocation
methods \cite{HES,PAR}. We suppose that the density $\rho_0$ and the macroscopic membrane
potential $V$  are both known at the mesh points $(\mx_\mj)_{\mj\in
  \J_{n_x}}$, then we compute an approximation of the Fourier
coefficients for $u\in\{\rho_0,\, \rho_0V\}$ as,
$$
\widehat{u}(t,\mk) \,:=\, \dfrac{1}{n_x^d}\,\ds\sum_{\mj\in \J_{n_x}}
u(t,\mx_\mj)\,e^{-i\,\mk\cdot\mx_\mj}, \quad \mk\in\J_{n_x}\,.
$$
and get a trigonometric polynomial
$$
u_{n_x}(t,\mx)\,:=\,\ds\underset{\mk\in
  \J_{n_x}}{\sum}\,\widehat{u}(t,\mk)\,e^{i\,\mk\cdot\mx}, \quad u\in\{\rho_0,\, \rho_0V\}\,.
$$
Therefore, we substitute this polynomials in \eqref{Lu}, which yields a discrete operator $\mL_{n_x}^S$  given by
 \begin{equation}
  \label{def:L0}
\mL_{n_x}^S[u] \,:=\, \sum_{\mk\in \J_{n_x}} \widehat{\mL}^S[u](t,\mk)\, e^{i\,\mk\cdot\mx},
\end{equation}
where  $\widehat{\mL}^S[u]$ is given by
\begin{equation}
  \label{def:hatL0}
\widehat{\mL}^S[u](t,\mk) \,\,=\,\,  (2\pi)^d
\,\widehat\Psi_\ve(\mk) \,\widehat{u}(t,\mk)
\end{equation}
and $\widehat\Psi_\ve$ is the expansion coefficient depending on the
connectivity kernel
\begin{equation}
\label{def:hat1}
\widehat\Psi_\ve(\mk)\,=\,\frac{1}{(2\pi)^d}\int_\T \Psi_\ve(\|\mx\|)\,e^{-i\,\mk\cdot\mx}\,\md \mx.
\end{equation}

Finally the approximation $\K_{n_x}^S[f]$ of  the operator $\K^S[f]$
is provided  by 
  \begin{equation}
\label{def:Ksn}
\K_{n_x}^S[f](t,\mx,v) \,=\, \frac{1}{\ve^2}\,\left(\mL^S_{n_x}[\rho_0V](t,\mx)  \,-\, v\,\mL^S_{n_x}[\rho_0](\mx) \right).
\end{equation}
  
Let us focus on the computation of the kernel modes
$(\widehat\Psi_\ve(\mk))_{\mk\in \J_{n_x}}$ for any fixed parameter
$\ve>0$. In the spirit of \cite{PAR} for the Boltzmann equation, our
purpose is to prove that these coefficients can be computed as
one-dimensional integrals, so that we can store them in an array, but
also to compute the asymptotic limit when $\ve\rightarrow 0$ in order to
ensure that the scheme is consistent and stable when $\ve\ll 1$. 

Using the change of variable $\mx = r\,\omega$, for $r\geq 0$ and $\omega\in\mathbb{S}^{d-1}$, %and using the fact that $\text{Supp}(\Psi)\,\subseteq\,[0,\pi)$ according to the assumptions \eqref{hyp:psi} satisfied by $\Psi$, 
we get:
$$
\widehat\Psi_\ve(\mk) \,=\, \dfrac{1}{(2\pi)^d}\ds\int_0^\pi \Psi_\ve(r) \, r^{d-1} \,{I}(\mk,r)\,\md r,$$
where
$${I}(\mk,r) \,:=\,
\ds\int_{\mathbb{S}^{d-1}}\exp(-i\,r\,\mk\cdot\omega)\,\md\omega.$$
Then,  changing the variable $r$ into $s=r/\ve$, we get
$$
\widehat\Psi_\ve(\mk) \,=\,  \dfrac{1}{(2\pi)^d}\ds\int_0^{\pi/\ve} \Psi(s) \, s^{d-1} {I}(\mk,\ve\,s)\,\md s.
$$

To complete the computation of the function ${I}$, we have to
study separately each possible value for the spatial dimension
$d\in\{1,2,3\}$. 
\paragraph{One-dimensional case: $d=1$. } Since
$\mathbb{S}^0\,=\,\{-1,1\}$, it is straightforward to check that for
any $\mk\in \J_{n_x}$,
$${I}(\mk,r)\,=\,2\,\cos(r\,|\mk|),$$
hence  we get:
$$
\widehat\Psi_\ve(\mk) \,=\,\dfrac{1}{\pi}\ds\int_0^{\pi/\ve} \Psi(s) \,\cos(\ve\,s\,|\mk|)\,\md s.
$$

\paragraph{Two-dimensional case: $d=2$. } Let $r\geq0$ and $\mk\in \J_{n_x}$. In this case, {setting} $\mathbf{q} = -r\,\mk$, then using spherical coordinates, we have
\begin{align*}
{I}(\mk,r) & =\, \ds\int_{\mathbb{S}^1}\exp\left(
  i\,\mathbf{q}\cdot\omega \right)\md\omega    \, =\,\ds\int_0^{2\pi}
  \exp\left( i\,r\,\|\mk\|\,\cos \theta \right)\md\theta     \\ \,  &=\,2\ds\int_0^{\pi} \cos\left( r\,\|\mk\|\,\sin \theta \right)\md\theta   
\; =\,2\pi\, \mathcal{J}_0(r\,\|\mk\| ),
\end{align*}
where $\mathcal{J}_0$ is the Bessel function of order $0$, defined with
$$\mathcal{J}_0:x\in\R\mapsto \dfrac{1}{\pi}\ds\int_0^\pi \cos\left( x\,\sin\theta \right)\,\md\theta\,=\,\underset{l=0}{\overset{\infty}{\ds\sum}}\,\dfrac{(-1)^l}{(l!)^2}\,\left(\dfrac{x}{2}\right)^{2\,l}  .$$
Consequently, we get
\begin{align*}
\widehat\Psi_\ve(\mk) \,=\,\dfrac{1}{2\pi}\ds\int_0^{\pi/\ve} \Psi\left( s\right)\,s\,\mathcal{J}_0\left( \ve\,s\,\|\mk\| \right)\,\md s.
\end{align*}

\paragraph{Three-dimensional case: $d=3$. } Let $r\geq0$ and $\mk\in \J_{n_x}$. Hence, {setting} $\mathbf{q} = -r\,\mk$, and then using spherical coordinates, we get
\begin{align*}
{I}(\mk,r) & =\,\ds\int_{\mathbb{S}^2}\exp\left(
  i\,\mathbf{q}\cdot\omega \right)\md\omega   \,=\,2\pi\,\ds\int_0^\pi
  \exp\left( i\,\|\mathbf{q}\|\,\cos(\theta)
  \right)\,\sin\theta\,\md\theta    \\  &=\,2\pi\,\ds\int_{-1}^1 \exp\left( i\,\|\mathbf{q}\|\,\mu \right)\,\md\mu   
\; =\,4\pi\, \sinc\left( r\,\|\mk\| \right),
\end{align*}
where $\sinc(x)\,:=\,\sin(x)/x$. Thus, the kernel mode $\widehat\Psi_\ve(\mk)$ is given by
\begin{align*}
\widehat\Psi_\ve(\mk) \,=\,\dfrac{1}{2\pi^2}\ds\int_0^{\pi/\ve} \Psi\left( s \right)\,|s|^2\,\sinc\left( \ve\,s\,\|\mk\| \right)\,\md s.
\end{align*}

Now let us investigate the asymptotic behavior of the discrete operator $\mL_{n_x}^S[u]$ when $\ve\ll 1$.  To this {approach} we set $\mS_{n_x}$ the space of trigonometric polynomial of degree $n_x/2$ in each direction, defined as \cite{CHQZ}
$$
\mS_{n_x} \,=\,{\rm span}\left\{ \,e^{i\mk\cdot\mx}\,, \quad  -n_x/2 \leq \mk_j \leq n_x/2-1, \,\, j\,=\,1,\ldots,d\,\right\},  
$$
equipped with the classical $L^2$ norm $\|.\|_{L^2}$, which satisfies %\cite{CHQZ}
for any $u\in \mS_{n_x}$  
$$
\|u\|_{L^2}^2 \,=\, \left(\frac{2\pi}{n_x}\right)^d\, \sum_{j\in\J_{n_x}} |u(\mx_j)|^2
$$
and for any $u$ and $v\in \mS_{n_x}$, we also have
$$
\int_{\T} u(\mx)\,\overline{v}(\mx)\,\md\mx \,=\, \left(\frac{2\pi}{n_x}\right)^d\, \sum_{j\in\J_{n_x}} u(\mx_j)
\,\overline v(\mx_j). 
$$
Finally we define by $\mI_{n_x}$ the projection operator from
$\mC(\T)$ to $\mS_{n_x}$ such that $\mI_{n_x} u (\mx_j) = u(\mx_j)$,
for all $j\in\J_{n_x}$.

\begin{prop}
\label{prop:psi}
Let $d\in\{1,2,3\}$ and consider a connectivity
kernel $\Psi$ satisfying \eqref{hyp:psi} with
\begin{equation}
  \label{hyp:psi4}
\int_{\R^d} \Psi(\|\my\|)\,\|\my\|^4 \;\md\my \,<\, \infty.
\end{equation}
Then, for all $\mk\in \J_{n_x}$, there exists a positive constant $C>0$, depending on $\Psi$, such that for all $\ve>0$,
\begin{equation}
  \label{eq:limpsi}
\left|(2\pi)^d\,\widehat\Psi_\ve(\mk) \,-\,\overline{\Psi} \,+\,\overline{\sigma}\,\ve^2\,\|\mk\|^2   \right| \,\leq\, C\,\left(\|\mk\|^4+1\right)\,\ve^4\,.
\end{equation}
Moreover for any trigonometric polynomial $u \in \mS_{n_x}$, we have 
\begin{equation}
\label{eq:limLve}
\left\| \mL^S_{n_x}[u]  \,-\,\overline{\Psi}\,u
    -\overline{\sigma}\,\ve^2\,\Delta u \right\|_{L^2} \,\leq\, C
\,\ve^4\,\left(\|\Delta^2 u\|_{L^2} + \|u\|_{L^2}  \right).
\end{equation}
\end{prop}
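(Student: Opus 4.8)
The plan is to establish the pointwise estimate \eqref{eq:limpsi} on the kernel modes first, and then deduce the operator estimate \eqref{eq:limLve} from it by a Plancherel argument on the finite-dimensional space $\mS_{n_x}$. For \eqref{eq:limpsi}, rather than treating $d\in\{1,2,3\}$ separately through the explicit cosine, Bessel and sinc formulas derived above, I would argue uniformly in $d$ starting from
$$
(2\pi)^d\,\widehat\Psi_\ve(\mk)\,=\,\int_0^{\pi/\ve}\Psi(s)\,s^{d-1}\,I(\mk,\ve\,s)\,\md s,\qquad I(\mk,r)\,=\,\int_{\mathbb{S}^{d-1}}e^{-i\,r\,\mk\cdot\omega}\,\md\omega .
$$
Expanding $e^{-i\,r\,\mk\cdot\omega}$ to third order, the first- and third-order terms vanish after integration over $\mathbb{S}^{d-1}$ by oddness of $\omega\mapsto\mk\cdot\omega$, the remainder obeys $|R(\mk,r)|\leq C\,r^4\|\mk\|^4$ since $|e^{ix}-(1+ix-\tfrac{x^2}{2}-\tfrac{ix^3}{6})|\leq\tfrac{x^4}{24}$, and a direct computation in polar coordinates -- using $\int_{\mathbb{S}^{d-1}}(\mk\cdot\omega)^2\,\md\omega$ -- identifies the zeroth- and second-order contributions of $(2\pi)^d\widehat\Psi_\ve(\mk)$ with $\overline\Psi$ and $-\overline\sigma\,\ve^2\|\mk\|^2$ via \eqref{hyp:psi}. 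There then remain two error terms, both of which I expect to be $O\big(\ve^4(\|\mk\|^4+1)\big)$: the Taylor remainder contributes at most $C\,\ve^4\|\mk\|^4\int_0^\infty\Psi(s)\,s^{d+3}\,\md s$, which is finite by the fourth-moment hypothesis \eqref{hyp:psi4} since $\int_{\R^d}\Psi(\|\my\|)\|\my\|^4\,\md\my=|\mathbb{S}^{d-1}|\int_0^\infty\Psi(s)\,s^{d+3}\,\md s$; and replacing $\int_0^{\pi/\ve}$ by $\int_0^\infty$ in the zeroth- and second-order terms discards tails of the form $\int_{\pi/\ve}^\infty\Psi(s)\,s^{d-1}\,\md s$ and $\ve^2\|\mk\|^2\int_{\pi/\ve}^\infty\Psi(s)\,s^{d+1}\,\md s$, which on $\{s\geq\pi/\ve\}$ are bounded via $1\leq(\ve s/\pi)^4$ and $1\leq(\ve s/\pi)^2$ by $C\,\ve^4$ and $C\,\ve^4\|\mk\|^2$ respectively, again thanks to \eqref{hyp:psi4}. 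Collecting these estimates gives \eqref{eq:limpsi}.

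For \eqref{eq:limLve}, write $u=\sum_{\mk\in\J_{n_x}}\widehat u(\mk)\,e^{i\mk\cdot\mx}\in\mS_{n_x}$, whose discrete Fourier coefficients are the genuine ones. By \eqref{def:L0}--\eqref{def:hatL0} we have $\mL^S_{n_x}[u]=\sum_{\mk\in\J_{n_x}}(2\pi)^d\widehat\Psi_\ve(\mk)\,\widehat u(\mk)\,e^{i\mk\cdot\mx}$, while $\Delta e^{i\mk\cdot\mx}=-\|\mk\|^2e^{i\mk\cdot\mx}$ and $\Delta^2 e^{i\mk\cdot\mx}=\|\mk\|^4e^{i\mk\cdot\mx}$, hence
$$
\mL^S_{n_x}[u]-\overline\Psi\,u-\overline\sigma\,\ve^2\,\Delta u\,=\,\sum_{\mk\in\J_{n_x}}\Big((2\pi)^d\widehat\Psi_\ve(\mk)-\overline\Psi+\overline\sigma\,\ve^2\|\mk\|^2\Big)\,\widehat u(\mk)\,e^{i\mk\cdot\mx}.
$$
Applying Plancherel on $\T$, namely $\|g\|_{L^2}^2=(2\pi)^d\sum_{\mk\in\J_{n_x}}|\widehat g(\mk)|^2$ for $g\in\mS_{n_x}$, together with \eqref{eq:limpsi} and the crude bound $(\|\mk\|^4+1)^2\leq 2(\|\mk\|^8+1)$, yields
$$
\big\|\mL^S_{n_x}[u]-\overline\Psi\,u-\overline\sigma\,\ve^2\,\Delta u\big\|_{L^2}^2\,\leq\,2\,C^2\,\ve^8\,(2\pi)^d\sum_{\mk\in\J_{n_x}}\big(\|\mk\|^8+1\big)\,|\widehat u(\mk)|^2\,=\,2\,C^2\,\ve^8\big(\|\Delta^2u\|_{L^2}^2+\|u\|_{L^2}^2\big),
$$
and \eqref{eq:limLve} follows on taking square roots (using $a^2+b^2\leq(a+b)^2$ for $a,b\geq0$) and renaming the constant.

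The one genuinely delicate step is \eqref{eq:limpsi}: one must produce Taylor remainders for $I(\mk,\cdot)$ -- equivalently for $\cos$, $\mathcal{J}_0$ and $\sinc$ -- that are uniform in $\ve$ with the $(\ve s\|\mk\|)^4$ scaling, and then absorb the mismatch between the truncated integration range $[0,\pi/\ve]$, which is forced by the restriction of the spectral method to $\T=[-\pi,\pi]^d$, and the full half-line $[0,\infty)$. Both are quantitatively controlled precisely by the fourth-moment assumption \eqref{hyp:psi4}: it is this extra integrability that upgrades the obvious $O(\ve^2)$ accuracy of the expansion to the $O(\ve^4)$ required to make the second-order scheme consistent with the macroscopic system \eqref{eq:macro}. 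Once \eqref{eq:limpsi} is in hand, \eqref{eq:limLve} is just Plancherel bookkeeping and presents no further difficulty.
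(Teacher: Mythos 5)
Your proposal is correct and follows essentially the same route as the paper: a Taylor expansion of the kernel modes at order four, with the fourth-moment hypothesis \eqref{hyp:psi4} controlling both the expansion remainder and the tails coming from truncating the radial integral at $\pi/\ve$, followed by Plancherel on $\mS_{n_x}$ to convert the pointwise bound \eqref{eq:limpsi} into the operator bound \eqref{eq:limLve}. Expanding $e^{-i r\,\mk\cdot\omega}$ on the sphere with the odd terms cancelling is just a dimension-uniform way of performing the paper's Taylor expansion of $I(\mk,\cdot)$ at $r=0$, and if anything you are more explicit than the paper about the normalization of the radial moments and the final $\ell^2$ bookkeeping.
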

\begin{proof}
 On the one hand, for any $\mk\in \J_{n_x}$, we perform a Taylor expansion of
 ${I}(\mk,.)$ at $r=0$ and using the assumptions  \eqref{hyp:psi4}  on $\Psi$, it yields
 \begin{eqnarray*}
&&\left|(2\pi)^d\,\widehat\Psi_\ve(\mk) \,-\,\int_0^{\pi/\ve }\Psi(s)
  s^{d-1}\md s
  \,-\,\ve^2\,  \|\mk\|^2\,\int_0^{\pi/\ve }\Psi(s)
   s^{d+1}\md s   \right| \\
   &&\leq\,
\|\mk\|^4\,\ve^4\,\int_{\R^d}\|\my\|^4\,\Psi(\|\my\|)\md \my.
\end{eqnarray*}
On the other hand, we have
$$
\int_{\pi/\ve}^\infty \Psi(s)
  s^{d-1}\md s \,+\,    \ve^2\int_{\pi/\ve}^\infty \Psi(s)
  s^{d+1}\md s \leq \ve^4 \left( \frac{1}{\pi^4} +\frac{1}{\pi^2} \right) \, \int_{\R^d}\|\my\|^4\,\Psi(\|\my\|)\md \my. 
  $$
  Gathering these results and using \eqref{hyp:psi}, there exists a constant $C>0$, depending on
  $\Psi$, such that
  $$
\left|(2\pi)^d\,\widehat\Psi_\ve(\mk) \,-\,\overline{\Psi} \,+\,\overline{\sigma}\,\ve^2\,\|\mk\|^2   \right| \,\leq\, C\,(\|\mk\|^4+1)\,\ve^4\,.
  $$
 Then, we consider  $u \in \mS_{n_x}$ and for $\mk\in\J_{n_x}$, we substitute the latter
 result in the expression \eqref{def:hatL0} of $\widehat{\mL}^S[u](\mk)$, it yields for each 
 $$
\left|\widehat{\mL}^S[u](\mk) \,-\, \left(\overline{\Psi}
  +\overline{\sigma}\,\ve^2\,\|\mk\|^2\right) \widehat{u}(\mk) \right|
\,\leq \, C\,\ve^2 \, (\|\mk\|^4+1)\,|\widehat{u}(\mk)|.
 $$
 Thus, from the definition of \eqref{def:L0}, we know that
 $\mL_{n_x}^S[u]\in \mS_{n_x}$ and get
 \begin{eqnarray*}
\|\mL_{n_x}^S[u] - \overline{\Psi} \,u - \overline{\sigma}\,\ve^2 \Delta u
   \|_{L^2} &= & \left(\sum_{\mk\in\J_{n_x}}  \left|\widehat{\mL}^S[u](\mk) \,-\, \left(\overline{\Psi}
  +\overline{\sigma}\,\ve^2\,\|\mk\|^2\right) \widehat{u}(\mk) \right|^2\right)^{1/2}\,, 
\\
   &\leq& C\,\ve^4\, (\|\Delta^2 u\|_{L^2} + \|u\|_{L^2})\,.
 \end{eqnarray*}
\end{proof}

\subsection{Particle/Spectral methods for \eqref{eq:kin-1}}
We now consider the transport equation \eqref{eq:kin-1} and apply a standard particle method. This kind of numerical scheme was first introduced
by Harlow \cite{HAR} for the numerical computation of specific
problems in fluid dynamics, and precisely mathematically studied later
\cite{RAV}. Thus a large diversity of particle methods were developed
for the simulation in fluid mechanics and plasma physics (see for
instance \cite{FIL}, \cite{FIL2} and references therein). The method consists in
approximating the solution $f$ to \eqref{eq:kin-1} with a sum of
Dirac masses centered in a finite number of solutions of the
characteristic system \eqref{eq:char}. These solutions stand for some
particles characterized by a pair membrane potential-adaptation
variable $(v,w)\in\R^2$. 

{In our case, since the transport equation
  \eqref{eq:kin-1} involves  a high dimensional space $(t,\mx,v,w)\in
  \R^+\times \R^d\times \R^2$, a particle method seems to be the most
  natural approach. Moreover, although the cost of the simulation
  increases with the number of particles considered, this kind of
  method has already shown its efficiency to describe complex dynamics in plasma physics and fluid dynamics.}

We approximate the solution $f$ to the transport
equation \eqref{eq:kin-1} at each point $\mx\in\T$,
$$
f_M(t,\mx,\md v,\md w) \,:=\,
\,\dfrac{\rho_0(\mx)}{M}\,\sum_{p=1}^{M}
\delta_{\V_{p}(t,\mx)}(\md
v)\otimes\delta_{\W_{p}(t,\mx)}(\md w),
$$
where  $M\in\mathbb{N}^*$, $\delta$ stands for the Dirac measure, and
for any $t\geq 0$, $(\V_{p},\W_{p})(t)\in \mS_{n_x}$ is the solution of the spatially discretized
characteristic system which can be written as follows,
$\mx\in\T$ and $1\leq p \leq M$
\begin{equation}
 \label{eq:chardiscret}
\left\{\begin{array}{l}
\dfrac{\md \V_{p}}{\md t} \,=\, \mI_{n_x}\left(N(\V_{p}) + \K_{n_x}^S[f_M](\V_{p})\right) \,-\, \W_{p}, \\ \, \\
\dfrac{\md \W_{p}}{\md t}\,=\, A\left( \V_p,\W_p\right),
\end{array}\right.
\end{equation}
with a given initial data $(\V^{0}_{p},\W^{0}_{p})\in \mS_{n_x}$ for
$1\leq p\leq M$ and $\mI_{n_x}$ is the projection operator on $\mS_{n_x}$.  Moreover,  we define the
macroscopic potential $V_M$ at each point $(t,\mx)\in\R^+\times\T$, as
\begin{equation}
  \left\{
    \begin{array}{l}
      \ds \rho_0 = \ds\int_{\R^2}f_M(t,\mx,\md v,\md w)\,,
      \\ \, \\
\ds\rho_0\,V_M(t,\mx) \,:=\, \ds\int_{\R^2}
v\,f_M(t,\mx,\md v,\md w) \,=\, \dfrac{1}{M}\,\sum_{p=1}^{M} \rho_0(\mx)\,\V_{p}(t,\mx)\,,
    \\ \, \\
   \ds\rho_0\,W_M(t,\mx) \,:=\, \ds\int_{\R^2}
w\,f_M(t,\mx,\md v,\md w) \,=\, \dfrac{1}{M}\,\sum_{p=1}^{M} \rho_0(\mx)\,\W_{p}(t,\mx)\,.
  \end{array}\right.
\label{my:macro}
\end{equation}
From these macroscopic quantities, it is then possible to compute the
discrete operator $\K_{n_x}^S[f_M]$ given in \eqref{def:Ksn}, where
\eqref{eq:chardiscret}--\eqref{my:macro} are solved at each mesh point $(\mx_\mj)_{\mj\in
  \J_{n_x}}$.

\subsection{Time discretization}
 The time discretization of \eqref{eq:chardiscret} is the key point to
 get an asymptotic preserving scheme.
 The basic idea is do develop numerical methods that
preserve the asymptotic limits ($\varepsilon\rightarrow 0$) from the microscopic to the macroscopic
models in the discrete setting. Contrary to multi-physics domain
decomposition methods, the asymptotic preserving schemes only solve the
microscopic equations avoiding the coupling of different models. This
approach  generates automatically  macroscopic
solvers when, in the asymptotic regime, the small time and space
scales are not resolved numerically. This idea can be illustrated in
Figure \ref{fig:ap}.

% Define block styles
%\tikzstyle{decision} = [diamond, draw, fill=blue!20,   text width=4.5em, text badly centered, node distance=3cm, inner sep=0pt]
\tikzstyle{block} = [rectangle, draw, fill=blue!20, text width=6em, text centered, rounded corners, minimum height=3.5em]
\tikzstyle{line} = [line width=1.pt, draw, -latex']

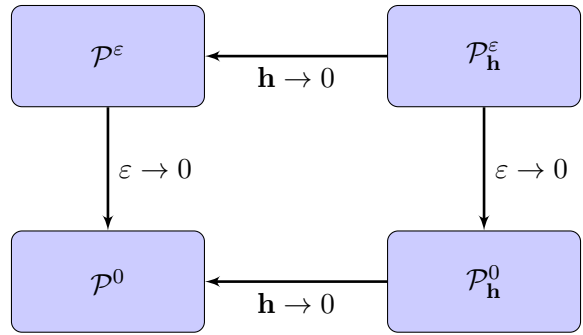
\begin{figure}[ht!]
 \begin{center}
  \begin{tikzpicture}[node distance = 2cm, auto]
      % Place nodes
 \node [block] (p1) {$\mathcal{P}^\ve$ };
 \node [block, right of=p1,  node distance=5cm] (p2) {$\mathcal{P}^\ve_\mh$};
 \node [block, below of=p1,  node distance=3cm] (p3) {$\mathcal{P}^0$};
\node [block, below of=p2,  node distance=3cm] (p4) {$\mathcal{P}^0_\mh$};
     % Draw edges
    \path [line]  (p2) -- node [] {$\mh \rightarrow 0$} (p1);
    \path [line] (p1) -- node [] {$\ve \rightarrow 0$} (p3);
    \path [line] (p4) -- node [] {$\mh \rightarrow 0$} (p3);
    \path [line] (p2) -- node [] {$\ve \rightarrow 0$} (p4);
  \end{tikzpicture}
  \end{center}
\caption{{\bf Asymptotic preserving diagram} is performed to evaluate
  uniform  error estimates $\|\mathcal{P}^\ve- \mathcal{P}_\mh^\ve\|$
  with respect to $\ve$. }
 \label{fig:ap}
\end{figure}

Suppose, we start with a microscopic model
$\mathcal{P}^\varepsilon$, which depends on a parameter $\varepsilon$,
characterizing the small scale. As $\varepsilon \rightarrow 0$, the
model is approximated by a macroscopic model
$\mathcal{P}^0$, which is independent of $\varepsilon$. We want to
design a discretization  $\mathcal{P}^\varepsilon_{\mh}$ of
$\mathcal{P}^\varepsilon$, where $\mh$ is the numerical parameter
(mesh size and time step). If the asymptotic limit
$\mathcal{P}^\varepsilon_{\mh}$, as $\varepsilon \rightarrow 0$ (with
a fixed $\mh$),  exists, then it is denoted by
$\mathcal{P}^0_{\mh}$. Furthermore when $\mathcal{P}^0_{\mh}$ is a
stable and consistent approximation of $\mathcal{P}^0$, then the
scheme $\mathcal{P}^\varepsilon_{\mh}$ is called asymptotic preserving.
Error on an asymptotic preserving scheme is obtained from the
following argument. Typically, we consider  $\mathcal{P}^\varepsilon$
and $\mathcal{P}^0$,  corresponding for instance to
\eqref{eq:kin-1}-\eqref{eq:kin-2} and its asymptotic model \eqref{eq:macro},
we expect formally \cite{CRE},
\begin{equation}
  \label{cont:anal}
\mathcal{E}_0({\ve}) \,:=\, \|  \mathcal{P}^\varepsilon - \mathcal{P}^0 \|
= O(\ve^2).
\end{equation}
Then we assume that $\mathcal{P}^\varepsilon_{\mh}$ is an $r$-order approximation of $\mathcal{P}^\varepsilon$ for a fixed $\ve>0$. Due to
the presence of the small parameter $\ve>0$, a classical numerical
analysis typically gives the following error estimates
\begin{equation}
  \label{class:anal}
\mathcal{E}_\mh(\ve) \,:=\,   \|  \mathcal{P}^\varepsilon_\mh - \mathcal{P}^\ve \|
= O(\|\mh\|^r/\ve^s), \quad s> 0,
\end{equation}
which blows-up when $\ve \ll 1$. Hence, the main issue of the asymptotic preserving analysis is to
establish the discrete counterpart of  the asymptotic
analysis \eqref{cont:anal}, that is, for a fixed $\mh$,
\begin{equation}
  \label{disc:anal}
\widetilde{\mathcal{E}}_\mh({\ve}) \,:=\, \|  \mathcal{P}^\varepsilon_\mh - \mathcal{P}^0_\mh \|
= O(\ve^2)
\end{equation}
and in the limit $\ve\rightarrow 0$,
\begin{equation}
  \label{disc0:anal}
  \mathcal{E}_\mh(0) \,:=\,  \|  \mathcal{P}^0_\mh - \mathcal{P}^0 \|
= O(\|\mh\|^r).
\end{equation}
Clearly, if we add up the error estimates \eqref{cont:anal},  \eqref{disc:anal}
and  \eqref{disc0:anal}, by the triangle inequality, we have
\begin{equation}
  \label{ap:anal}
  \mathcal{E}_\mh({\ve}) \,\leq \, \mathcal{E}_0({\ve}) +
  \widetilde{\mathcal{E}}_\mh({\ve}) + \mathcal{E}_\mh(0)
= O(\ve^2+\|\mh\|^r).
\end{equation}
By comparing the two error estimates \eqref{class:anal} and
\eqref{ap:anal}, it yields
$$
\mathcal{E}_\mh({\ve})  \leq C\, \min\left(\frac{\|\mh\|^r}{\ve^s},
  \,\ve^2 + \|\mh\|^r \right),
$$
showing that when $\ve \ll 1$, the error does not blow-up. This formal
argument applies to any asymptotic preserving schemes, although a
rigorous proof will be problem dependent, based on the regularity of
the solution to $\mathcal{P}^\ve$ and the specific scheme   $\mathcal{P}_\mh^\ve$.

Now, the aim is to apply this strategy  to  \eqref{eq:chardiscret}, which
corresponds to the characteristic curves of
\eqref{eq:kin-1}-\eqref{eq:kin-2}.  We have to be especially careful
about the stiff nonlocal terms in \eqref{eq:char}, where the small
parameter $\ve>0$ appears. We
cannot use a fully explicit scheme, which does not provide an
AP-scheme unless $\Delta t=O(\ve^2)$, whereas a fully implicit time discretization would be too costly because of the spectral collocation method for the nonlocal terms. 

Therefore, our strategy consists in applying implicit-explicit
numerical scheme, and to treat $\V_M$  as an additional unknown of the
system. In the following, we consider  $\Delta t>0$ and for all
$n\in\mathbb{N}$, we set $t^n\,=\, n \,\Delta t$.

In this section, we propose a first and a second order time
discretization scheme and prove some consistency properties when $\mh$ is fixed and $\ve
\rightarrow 0$ (see Proposition \ref{prop:cvRK1} and \ref{prop:cvRK2}
) corresponding to the error estimate $\widetilde{\mathcal{E}}_\mh(\ve)$ in \eqref{disc:anal} and when $\ve$ is fixed and $\mh
\rightarrow 0$ (see Lemma \ref{lem:cons1} and \ref{lem:cons2})
corresponding to the error estimates   ${\mathcal{E}}_\mh(\ve)$ in
\eqref{class:anal}.

\subsubsection*{A first order semi-implicit scheme}

We propose a first order semi-implicit scheme, that is for any time
step $n\in\mathbb{N}$ and any particle index $1\leq p\leq M$, we
approximate  $\left(\V_{p}(t^n),\W_{p}(t^n)\right)$ solution {of}
\eqref{eq:chardiscret} by $(\V_{p}^{n},\W^{n}_{p})\in \mS_{n_x}\times\mS_{n_x}$ given by the
following system 
\begin{equation}
\label{eq:RK1}
\left\{
  \begin{array}{l}
\ds\dfrac{\V_{p}^{n+1}-\V_{p}^{n}}{\Delta t} -
         \mI_{n_x}\left(N(\V_{p}^{n})+
    \frac{1}{\ve^2}\left[\mL^S_{n_x}[\rho_0V_M^n]  -
    \V_p^{n+1}\mL^S_{n_x}[\rho_0] \right]\right) + \W_{p}^{n} = 0\,,
         \\ \, \\
\dfrac{\W_{p}^{n+1}-\W_{p}^{n}}{\Delta t} - A\left(
    \V_{p}^{n+1},\W_{p}^{n} \right) = 0\,,
\end{array}\right.
\end{equation}
where $V^{n}_M$ denotes an approximation of the macroscopic membrane
potential. Using the linearity of $A$ and the fact that $(\V_{p}^{n},\W^{n}_{p})\in \mS_{n_x}\times\mS_{n_x}$, the system \eqref{eq:RK1} yields that $(\V_{p}^{n+1},\W^{n+1}_{p})\in \mS_{n_x}\times\mS_{n_x}$. Moreover, since the projection $\mI_{n_x}$ is linear, and $\mL^S_{n_x}[\rho_0\,V^n_M]\in \mS_{n_x}$ according to its definition \eqref{def:L0}, we get that the right term in the first equation in \eqref{eq:RK1} reads
\begin{multline*}
\mI_{n_x}\left(N(\V_{p}^{n})\,+\, \frac{1}{\ve^2}\,\left[\mL^S_{n_x}[\rho_0V_M^n]  \,-\, \V_p^{n+1}\,\mL^S_{n_x}[\rho_0] \right]\right)  \,-\, \W_{p}^{n} \\
=\,\mI_{n_x}\left(N(\V_{p}^{n})\right)\,+\, \frac{1}{\ve^2}\,\left[\mL^S_{n_x}[\rho_0V_M^n]  \,-\, \mI_{n_x}\left(\V_p^{n+1}\,\mL^S_{n_x}[\rho_0]\right) \right]  \,-\, \W_{p}^{n} .
\end{multline*}

On the one hand, let us emphasize that the stiff term, for $\ve\ll 1$, is treated implicitly but  can be solved exactly whereas other terms, nonlinear with respect to $\V_p$, are considered explicitly. Formally speaking, when $\ve$ tends to zero, at each point $\mx_j\in \T$, $j\in\J_{n_x}$, the microscopic potential $\V_p^{n+1}$ converges to $\mL^S_{n_x}[\rho_0V_M^n]/\mL^S_{n_x}[\rho_0]$.

On the other hand, the macroscopic membrane potential $V_M^n$ might be given by \eqref{my:macro} from the values $(\V_p^n)_{1\leq p\leq M}$. Unfortunately, this approach would not give the correct asymptotic behavior of the macroscopic membrane potential when $\ve \rightarrow 0$. {Indeed, as $\ve$ goes to $0$, the first equation in \eqref{eq:RK1} formally gives for all $p$ and $n$: 
  $$\V_p^{n+1}\,\sim\,V_M^n,$$
  which is the expected limit, but the non linear term $N(\V_p^n)$
  does not converge to $N(V_M^n)$. 
}

Therefore, we consider $V_M^n\in \mS_{n_x}$ as an additional variable solution {of} the following scheme
\begin{eqnarray}
  \label{eq:RK1J}
\dfrac{V^{n+1}_M-V^{n}_M}{\Delta t} &-& \ds
  \dfrac{1}{M}\underset{p=1}{\overset{M}{\sum}}\,\mI_{n_x}\left(
  N(\V_{p}^{n+1})\right) \\
  &-&  \frac{1}{\ve^2}\,\left[\mL^S_{n_x}[\rho_0V_M^n]  \,-\, \mI_{n_x}\left(V_M^{n}\,\mL^S_{n_x}[\rho_0]\right) \right]\,+\, W_{M}^{n} \,=\,0\,.
\nonumber
\end{eqnarray}
Observe here that the nonlinear term is computed implicitly from $(\V_p^{n+1})_{1\leq p\leq M}$ whereas the stiff term is now explicit.

Now, we define a numerical parameter $\mh\in\R^3$ as $\mh=(\Delta t, \Delta x, 1/M)$, where $\Delta x = 2\pi/n_x$ and let us show the consistency of the numerical scheme \eqref{eq:RK1}--\eqref{eq:RK1J} in the limit {as} $\ve\rightarrow 0$ for a fixed numerical parameter  $\mh$.
\begin{prop}[Consistency when $\ve\rightarrow 0$] 
  \label{prop:cvRK1}
  Let $\mh$ be a fixed parameter and consider a connectivity kernel  $\Psi:\R^+\rightarrow\R^+$ satisfying \eqref{hyp:psi}, \eqref{hyp:psi4} and a neuron density $\rho_0\in \mS_{n_x}$ satisfying \eqref{hyp:rho} at each grid point $\mx_j$, $j\in\J_{n_x}$. For all $\ve>0$, $p\in\{1,\ldots,M\}$ and $n\in\N$,  let us assume that the triplet $(\V_p^{\ve,n}, \,\W_p^{\ve,n}, V_M^{\ve, n})$ given by \eqref{eq:RK1}--\eqref{eq:RK1J} is uniformly bounded with respect to $\ve>0$. Then we define 
$$W_M^{\ve, n} = \frac{1}{M} \sum_{p=1}^M \W_p^{\ve,n}$$
and for all $j\in\J_{n_x}$, $(V_M^{\ve,n},W_M^{\ve,n})(\mx_j)$
converges to $(\overline V^n_M,\,\overline W^n_M)(\mx_j)$, as $\ve$
goes to $0$, solution {of}
\begin{equation}
  \label{eq:RK1lim}
\left\{\begin{array}{l}
\dfrac{\overline V_M^{n+1}-\overline V_M^{n}}{\Delta t} \,=\, \ds
         \mI_{n_x}\left( N\left(\overline V_M^{n}\right)\right)
         \,-\, \overline W_M^n   \\ \, \\
         \qquad\qquad\qquad\,+\, \overline{\sigma}  \,\left( \,\Delta
        \mI_{n_x}\left(\rho_0\,\overline V^n_M\right)
         - \mI_{n_x}\left(\overline V_M^n \Delta\rho_0  \right)\,\right),
         \\ \, \\
\dfrac{\overline W_M^{n+1}-\overline W_M^{n}}{\Delta t} \,=\, A\left(
         \overline V_M^n,\overline W_M^n \right).
\end{array}\right.
\end{equation}
\end{prop}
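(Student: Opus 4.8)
The plan is to pass to the limit $\ve \to 0$ in the three coupled relations \eqref{eq:RK1}--\eqref{eq:RK1J}, using the assumed uniform bound on $(\V_p^{\ve,n}, \W_p^{\ve,n}, V_M^{\ve,n})$ together with the asymptotic expansion of the discrete operator $\mL^S_{n_x}$ from Proposition \ref{prop:psi}. First I would argue by induction on $n$: since the initial data are independent of $\ve$, assume $(\V_p^{\ve,n}, \W_p^{\ve,n}, V_M^{\ve,n})$ converge (up to a subsequence, then for the whole sequence once the limit is characterized) to some $(\overline \V_p^n, \overline \W_p^n, \overline V_M^n)$ as $\ve \to 0$. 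The key observation is that the first equation of \eqref{eq:RK1} can be rewritten, isolating the only term carrying a $1/\ve^2$ factor, as
\[
\mL^S_{n_x}[\rho_0]\,\V_p^{\ve,n+1} \;=\; \mL^S_{n_x}[\rho_0 V_M^{\ve,n}] \;+\; \ve^2\Bigl(\tfrac{\V_p^{\ve,n+1}-\V_p^{\ve,n}}{\Delta t} - \mI_{n_x}N(\V_p^{\ve,n}) + \W_p^{\ve,n}\Bigr)\,,
\]
where I have used that $\mI_{n_x}$ commutes with multiplication inside $\mS_{n_x}$ as noted after \eqref{eq:RK1}. Since the right-hand side parenthesis is uniformly bounded and, by Proposition \ref{prop:psi}, $\mL^S_{n_x}[\rho_0] \to \overline\Psi\,\rho_0$ and $\mL^S_{n_x}[\rho_0 V_M^{\ve,n}] \to \overline\Psi\,\rho_0\,\overline V_M^n$ pointwise at each grid point, we obtain in the limit $\overline\Psi\,\rho_0(\mx_j)\,\overline\V_p^{n+1}(\mx_j) = \overline\Psi\,\rho_0(\mx_j)\,\overline V_M^n(\mx_j)$; dividing by $\overline\Psi$ and handling the grid points where $\rho_0$ vanishes separately (there the whole equation is trivial after multiplication by $\rho_0$, consistent with the factor $\rho_0$ in \eqref{eq:macro}), this gives the crucial synchronization identity $\overline\V_p^{n+1} = \overline V_M^n$ for every $p$. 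The second equation of \eqref{eq:RK1} passes to the limit trivially and gives $\overline\W_p^{n+1} = \overline\W_p^n + \Delta t\, A(\overline\V_p^{n+1}, \overline\W_p^n)$; averaging over $p$ and using the synchronization identity at level $n$ (from the induction hypothesis, $\overline\W_p^n$ averages to $\overline W_M^n$ and $\overline\V_p^n$ are all equal), one recovers the second line of \eqref{eq:RK1lim}.

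The main work is the limit of \eqref{eq:RK1J}. The nonlinear term $\frac1M\sum_p \mI_{n_x}N(\V_p^{\ve,n+1})$ converges, by continuity of $N$ and of $\mI_{n_x}$ on the finite-dimensional space $\mS_{n_x}$, to $\frac1M\sum_p \mI_{n_x}N(\overline\V_p^{n+1}) = \mI_{n_x}N(\overline V_M^n)$, precisely because all the $\overline\V_p^{n+1}$ coincide with $\overline V_M^n$ — this is exactly the point anticipated in the discussion preceding the Proposition, and it is why $V_M$ had to be introduced as a separate unknown rather than reconstructed from the particles. For the stiff bracket in \eqref{eq:RK1J}, I apply Proposition \ref{prop:psi} at order $\ve^2$: write
\[
\frac{1}{\ve^2}\Bigl[\mL^S_{n_x}[\rho_0 V_M^{\ve,n}] - \mI_{n_x}\bigl(V_M^{\ve,n}\mL^S_{n_x}[\rho_0]\bigr)\Bigr]
= \frac{1}{\ve^2}\Bigl[\bigl(\mL^S_{n_x}-\overline\Psi\bigr)[\rho_0 V_M^{\ve,n}] - \mI_{n_x}\bigl(V_M^{\ve,n}(\mL^S_{n_x}-\overline\Psi)[\rho_0]\bigr)\Bigr]\,,
\]
since the two $\overline\Psi$-terms cancel ($\overline\Psi\,\mI_{n_x}(V_M^{\ve,n}\rho_0) = \overline\Psi\,\rho_0 V_M^{\ve,n}$ in $\mS_{n_x}$); then \eqref{eq:limLve} gives $(\mL^S_{n_x}-\overline\Psi)[u] = \overline\sigma\,\ve^2\,\Delta u + O(\ve^4)$ in $L^2$, uniformly on bounded subsets of $\mS_{n_x}$, so the bracket converges to $\overline\sigma\bigl(\Delta(\rho_0\overline V_M^n) - \mI_{n_x}(\overline V_M^n\,\Delta\rho_0)\bigr)$. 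Combining with $W_M^{\ve,n} \to \overline W_M^n$ yields the first line of \eqref{eq:RK1lim}, which closes the induction.

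The main obstacle, and the only place requiring genuine care, is the division by $\overline\Psi\,\rho_0$ at grid points where $\rho_0(\mx_j)=0$: there one cannot conclude $\overline\V_p^{n+1}(\mx_j) = \overline V_M^n(\mx_j)$, and one must instead check that this does not matter — which it does not, because all terms in \eqref{eq:RK1lim} carry the implicit structure inherited from multiplying through by $\rho_0$ (mirroring the factor $\rho_0$ in the continuous system \eqref{eq:macro}), and the nonlinear averaging step only needs the identity $\mI_{n_x}N(\overline V_M^n)$ to hold after multiplication by $\rho_0/M$ summed over $p$. A secondary technical point is justifying that convergence along a subsequence suffices: since the limit system \eqref{eq:RK1lim} is an explicit recursion with a unique solution for given $(\overline V_M^n, \overline W_M^n)$, every convergent subsequence has the same limit, hence the full sequence converges. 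One should also note that the uniform-in-$\ve$ boundedness hypothesis is what legitimizes extracting limits and controlling the $O(\ve^2)$ and $O(\ve^4)$ remainders; no compactness beyond Bolzano–Weierstrass in the finite-dimensional space $\mS_{n_x}^{2M+1}$ is needed.
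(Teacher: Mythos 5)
Your argument is essentially the paper's own proof: multiplying the particle update by $\ve^2$ and invoking Proposition \ref{prop:psi} gives the synchronization identity $\overline\V_p^{n+1}=\overline V_M^{n}$ at the grid points where $\rho_0>0$, the stiff bracket in \eqref{eq:RK1J} is identified by the same add-and-subtract of $\overline\Psi\,\mI_{n_x}(\rho_0 V_M^{\ve,n})$ followed by the order-$\ve^2$ expansion of Proposition \ref{prop:psi} (note it is the interpolant $\mI_{n_x}(\rho_0 V_M^{\ve,n})$, not the product itself, that belongs to $\mS_{n_x}$, so $\mI_{n_x}$ does not simply drop), and uniqueness of the limit point upgrades subsequential to full convergence, exactly as in the paper. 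The two spots where you are looser---the aliasing claim that $\mI_{n_x}$ commutes with multiplication in $\mS_{n_x}$, and the treatment of grid points where $\rho_0$ vanishes (the nonlinear term in \eqref{eq:RK1J} carries no factor $\rho_0$)---are at the same level of informality as the published proof, so no essential step is missing.
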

\begin{proof}
  For any $p\in\{1,\ldots,M\}$ and $n\geq 0$, we
 denote by  $\left(\V_p^{\ve,n},\,\W_p^{\ve,n},\,
   V_M^{\ve,n} \right)_{\ve>0}$ the solution {of}
 \eqref{eq:RK1}--\eqref{eq:RK1J} computed at the grid points $(\mx_j)_{j\in\J_{n_x}}$. Since this sequence, abusively labeled
 by $\ve$, is uniformly bounded, there exists a sub-sequence, still labeled in the same manner, which
 converges to  $\left(\overline\V_p^{n},\,
   \overline\W_p^{n},\, \overline V_M^{n}\right)$ when
 $\ve\rightarrow 0$.

 On the one hand using the scheme \eqref{eq:RK1} on $\V_p^{n+1}$, we may write
\begin{eqnarray*}
\ve^2\dfrac{\V_{p}^{\ve,n+1}-\V_{p}^{\ve,n}}{\Delta t} &=& \ve^2\,
                                                           \mI_{n_x}\left(
                                                           N(\V_{p}^{\ve,n})\right)\,-\,
                                                           \ve^2\,\W_{p}^{\ve,n}
   \\ &&+\,  \left[\mL^S_{\ve,n_x}[\rho_0V_M^{\ve,n}]  \,-\, \mI_{n_x}\left(\V_p^{\ve,n+1}\,\mL^S_{\ve,n_x}[\rho_0] \right)\right]\,,
\end{eqnarray*}
 and pass to the limit with respect to $\ve$, it yields that for any $j\in\J_{n_x}$,
 \begin{align*}
 &\mL^S_{\ve,n_x}[\rho_0V_M^{\ve,n}](\mx_j) \,-\,
   \mI_{n_x}\left(\V_p^{\ve,n+1}\,\mL^S_{\ve,n_x}[\rho_0]
   \right)(\mx_j) \\
   &=\, \mL^S_{\ve,n_x}[\rho_0V_M^{\ve,n}](\mx_j)  \,-\,
      \V_p^{\ve,n+1}(\mx_i)\,\mL^S_{\ve,n_x}[\rho_0](\mx_j)  \underset{\ve\rightarrow 0}\longrightarrow\, 0.
 \end{align*}
  Then, applying Proposition \ref{prop:psi} to $\rho_0\in \mS_{n_x}$,
 we have $\| \mL^S_{\ve,n_x}[\rho_0]- \overline\Psi\,\rho_0\|_{L^2}\rightarrow 0$, when $\ve$ goes to $0$, that is,   for
   any $j\in\J_{n_x}$
 $$
 \left| \mL^S_{\ve,n_x}[\rho_0](\mx_j) - \overline\Psi\,\rho_0(\mx_j) \right| \underset{\ve\rightarrow 0}\longrightarrow 0\,. 
 $$
 Furthermore,  applying again Proposition \ref{prop:psi} to
 $\mI_{n_x}(\rho_0V_M^{\ve,n})\in \mS_{n_x}$, we also get 
 $$
 \left|\mL^S_{\ve,n_x}[\rho_0 V_M^{\ve,n}](\mx_j) - \overline\Psi\,\rho_0
 \overline V_M^{n}(\mx_j) \right| \underset{\ve\rightarrow 0}\longrightarrow0\,, 
$$
hence for any $j\in\J_{n_x}$ and $p\in\{1,\ldots,M\}$, the limit $\overline\V_p^{n+1}(\mx_j)$ does no depend on $p$ and is given by
 $$
 \overline\V_p^{n+1}(\mx_j) \,=\,
 \left\{
   \begin{array}{ll}
     \overline V_M^{n}(\mx_j),  & {\rm if }\, \rho_0(\mx_j)>0,
     \\[0.9em]
     0, & {\rm else.}
 \end{array}\right.
 $$
 Now we consider $W_M^{\ve,n}$ given by
 $$
 W_M^{\ve,n} = \frac{1}{M} \sum_{p=1}^M \W_p^{\ve,n}
 $$
and apply the second relation in \eqref{eq:RK1}, it gives by linearity of $A$,
$$
\dfrac{W_{M}^{\ve,n+1}-W_{M}^{\ve,n}}{\Delta t} \,=\, A\left( \frac{1}{M}\sum_{p=1}^M\V_{p}^{\ve,n+1},W_{M}^{\ve,n}\right),
$$
Passing to the limit $\ve\rightarrow 0$, we get an equation on the
limit $\overline W_M^n$ given by
$$
\dfrac{\overline W_{M}^{n+1}-\overline W_{M}^{n}}{\Delta t} \,=\,
A\left( \overline V_{M}^{n},\overline W_{M}^{n} \right).
$$
On the other hand, we start from \eqref{eq:RK1J} and again apply
Proposition \ref{prop:psi}, it yields that
$$
\frac{V_M^{\ve,n} \,\left(\mL^S_{\ve,n_x}[\rho_0] - \overline\Psi \rho_0 \right)}{\ve^2}
\underset{\ve\rightarrow 0}\longrightarrow \overline{\sigma}\,\overline V_M^n \,\Delta\rho_0\,,
$$
whereas
$$
\frac{\mL^S_{\ve,n_x}[\rho_0 V_M^{\ve,n}] - \overline\Psi \,\mI_{n_x}\left(\rho_0 V_M^{\ve,n}\right)}{\ve^2}
\,\underset{\ve\rightarrow 0}\longrightarrow \overline{\sigma}\,\Delta\mI_{n_x}\left(\rho_0\overline V_M^n\right)\,.
$$
Gathering these latter results, we get that when $\ve$ goes to zero, 
$$
\frac{\mL^S_{\ve,n_x}[\rho_0 V_M^{\ve,n}]-  \mI_{n_x}\left(V_M^{\ve,n}
  \,\mL^S_{\ve,n_x}[\rho_0]\right)}{\ve^2} \underset{\ve\rightarrow 0}\longrightarrow 
\overline{\sigma}\,\left[ \Delta\mI_{n_x}\left(\rho_0\overline V_M^n\right)\,-\,  \mI_{n_x}\left(\overline V_M^n \Delta\rho_0\right) \right]. 
$$
Therefore the limit $\overline V_M^{n+1}$ is solution {of}
$$
\dfrac{\overline V_M^{n+1}-\overline V_M^{n}}{\Delta t} \,=\, \ds
\mI_{n_x}\left(N\left(\overline V_M^{n}\right)\right)
         \,-\, \overline W_M^n \,+\, \overline{\sigma}\left( \Delta
           \mI_{n_x}(\rho\,\overline V^n_M)
         - \mI_{n_x}\left(\Delta\rho_0 \, \overline V_M^n\right) \right).
       $$
       Finally, since the limit point  $(\overline V_M^n,\, \overline W_M^n)$  is uniquely determined, actually all the sequence $(V_M^{\ve,n},\, W_M^{\ve,n})_{\ve>0}$  converges.
\end{proof}
{Now, let us investigate the consistency error of the numerical scheme
\eqref{eq:RK1}--\eqref{eq:RK1J} as the parameter $\mh$ goes to $0$,
for a fixed parameter $\ve$ like $\mathcal{E}_\mh(\ve)$ in \eqref{class:anal}. Let us note $(\V^\ve,\W^\ve)$ the flow of the characteristic system \eqref{eq:char}, and $V^\ve$ the macroscopic potential as defined in \eqref{def:macro}. From the numerical scheme \eqref{eq:RK1}--\eqref{eq:RK1J}, we write the system in the form for all $n\in\mathbb{N}$:
$$
\F_1\left(\V_p^{\ve,n},\V_p^{\ve,n+1},\W_p^{\ve,n},\W_p^{\ve,n+1},V_M^{\ve,n},V_M^{\ve,n+1}
\right) \,=\, 0,
$$
where $\F_1:\R^6\rightarrow \R^3$. We define the consistency error
$\mathcal{E}_{\mh}^n(\ve)$ as
\begin{equation}
  \label{def:F1}
\mathcal{E}_{\mh}^n(\ve) \,:=\, \left\| F_1\left(\V_p^{\ve,n},\V_p^{\ve,n+1},\W_p^{\ve,n},\W_p^{\ve,n+1},V_M^{\ve,n},V_M^{\ve,n+1}\right)\right\|_{L^\infty},
 \end{equation} 
  where $\|.\|_{L^\infty}$ is the classical $L^\infty$ norm.

\begin{lem}[Consistency in the limit $\mh\rightarrow0$]
  \label{lem:cons1}
Let $0<\ve<1$ be a fixed parameter and consider $\Psi$ satisfying \eqref{hyp:psi}, and  $\rho_0$ satisfying \eqref{hyp:rho} and such that
$\text{Supp}(\rho_0)\subset \mathcal{B}(0,S)$ where $S>0$.  We suppose
that $f^\ve$ the solution of \eqref{eq:kin-1}--\eqref{eq:kin-2} is
differentiable twice with respect to time and there exists a
constant $C_T>0$, independent of $\ve$, such that in $[0,T]\times \mathcal{B}(0,S)$,
$$
\left\|\int_{\R^2} \left( v^4 + w^4\right)\, f^\ve(., \md v,\md w)
\right\|_{L^\infty} \,+\, \| V^\ve \|_{L^\infty} \,\leq \, C_T\,.
$$
Consider the scheme  \eqref{eq:RK1}--\eqref{eq:RK1J} and the consistency error
$\mathcal{E}_{\mh}^n(\ve)$ in \eqref{def:F1}. Then, there exists another positive constant $C>0$, independent of
$\mh$ and $\ve$, such
that for all $n\in \{0,\ldots\,, [T/\Delta t]\}$, 
$$
\mathcal{E}_{\mh}^n(\ve)\,\leq \, C\,\left( 1 + \frac{1}{\ve^2}\right)\left(
  \dfrac{\Delta\mx^{3/2}}{\ve^4}\,+\,\Delta t \,+\,
  \dfrac{1}{M}\right).
$$
\end{lem}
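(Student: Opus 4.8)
The plan is to compare the numerical scheme \eqref{eq:RK1}--\eqref{eq:RK1J} term by term against the exact characteristic system \eqref{eq:char}, evaluated along the exact solution, and to track three distinct sources of error: (i) the time-discretization error coming from replacing $\md/\md t$ by a first-order difference quotient and from the implicit/explicit splitting; (ii) the spectral (collocation) error coming from replacing $\mathcal{K}^S$ by $\mathcal{K}_{n_x}^S$ and $\mathcal{L}^S$ by $\mathcal{L}_{n_x}^S$, i.e. from the projection $\mI_{n_x}$; and (iii) the particle/quadrature error coming from replacing $V^\ve$ by the empirical average $V_M^\ve = \frac1M\sum_p \V_p^\ve$. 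The final bound is the sum of these three contributions, which is why it has the shape $C(1+\ve^{-2})(\Delta\mx^{3/2}/\ve^4 + \Delta t + 1/M)$.

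First I would insert the exact solution $(\V^\ve,\W^\ve,V^\ve)$ into $\F_1$ and use Taylor's formula in time: since $f^\ve$ — hence $\V^\ve$, $\W^\ve$ and the macroscopic $V^\ve$ — is twice differentiable in $t$ with bounds uniform in $\ve$ (the hypothesis on $\int (v^4+w^4) f^\ve$ and $\|V^\ve\|_{L^\infty}$), each difference quotient $(\,\cdot^{n+1}-\cdot^n)/\Delta t$ equals the time derivative at $t^n$ up to $O(\Delta t)$. The explicit treatment of the nonlinear reaction $N(\V_p^n)$ versus its value at $t^{n}$, and the explicit-versus-implicit placement of $\W_p$ and of $V_M^n$ inside the stiff bracket, each produce a further $O(\Delta t)$ once multiplied by the appropriate derivative; crucially, the stiff bracket carries a prefactor $1/\ve^2$, so a naive $O(\Delta t)$ time-shift inside it would only cost $O(\Delta t/\ve^2)$ — consistent with the $(1+\ve^{-2})\Delta t$ piece of the claimed bound. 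Next I would handle the spectral error: by Proposition~\ref{prop:psi} the modified kernel modes satisfy $|(2\pi)^d\widehat\Psi_\ve(\mk) - \overline\Psi + \overline\sigma\ve^2\|\mk\|^2| \le C(\|\mk\|^4+1)\ve^4$, so $\mathcal{L}_{n_x}^S$ is a consistent discretization; the remaining piece is the aliasing/truncation error of $\mI_{n_x}$ applied to $\rho_0$, $\rho_0 V^\ve$, and $N(\V_p)$, which for functions with the assumed fourth-moment/regularity control is $O(\Delta\mx^{3/2})$ in $L^\infty$ (using the inverse inequality on $\mS_{n_x}$ to pass from $L^2$ spectral estimates to the $L^\infty$ norm in \eqref{def:F1}, which accounts for the fractional power $3/2$). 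Since this interpolation error sits inside the $1/\ve^2$-weighted nonlocal operator and the analysis of its $\ve$-expansion requires dividing by $\ve^2$ twice (once from $\mathcal{K}_{n_x}^S$ itself and once from the $\ve^2$ in the Taylor expansion of $\widehat\Psi_\ve$), one picks up the factor $\Delta\mx^{3/2}/\ve^4$. Finally, the particle error: replacing the true macroscopic moment $V^\ve(t^n,\mx_j) = \int v\, f^\ve$ by $\frac1M\sum_p \V_p^\ve(t^n,\mx_j)$ and likewise for $W^\ve$ introduces the Monte-Carlo-type defect $O(1/M)$ (deterministic here, since the particles are initialized as a quadrature of $f_0^\ve$), again possibly amplified by $1/\ve^2$ through the stiff bracket, giving the $(1+\ve^{-2})/M$ term. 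Collecting the three estimates and using the boundedness of $\rho_0$, $\|V^\ve\|_{L^\infty}$ and the fourth moments of $f^\ve$ to control all constants uniformly in $\ve$ and $\mh$ yields the stated bound for every $n \le [T/\Delta t]$.

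I expect the main obstacle to be the treatment of the nonlocal term: one must simultaneously (a) replace the continuous kernel $\Psi_\ve$ by its truncated, then spectrally-approximated version $\mathcal{L}_{n_x}^S$, quantifying the interpolation error of $\mI_{n_x}$ acting on $\rho_0 V^\ve$ — whose smoothness is only guaranteed through the moment bounds on $f^\ve$ — and (b) control this while everything is divided by $\ve^2$, so that the error estimates of Proposition~\ref{prop:psi}, which are of the form $\ve^4(\|\Delta^2 u\|_{L^2}+\|u\|_{L^2})$, must be combined with the interpolation estimate in just the right way to land on $\Delta\mx^{3/2}/\ve^4$ rather than something worse. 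In particular one has to be careful that the factors of $\|\mk\|^4$ appearing in Proposition~\ref{prop:psi} are absorbed by the regularity of the interpolants and do not degrade the $\Delta\mx$-power; the inverse inequality $\|\Delta^2 u\|_{L^2} \le C n_x^4 \|u\|_{L^2}$ on $\mS_{n_x}$, i.e. $\le C\Delta\mx^{-4}\|u\|_{L^2}$, is what converts the clean $\ve^4$ bound into an $\ve^4/\Delta\mx^4$-type term, and balancing this against the raw interpolation error of the data is the delicate bookkeeping step. The time-discretization and particle parts, by contrast, are routine Taylor expansions once the uniform-in-$\ve$ regularity hypotheses are invoked.
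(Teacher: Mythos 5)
Your overall decomposition (time error, spectral error, particle error, with the stiff $\ve^{-2}$ prefactor kept track of separately) matches the paper's splitting into the terms $\mathcal{T}_1,\dots,\mathcal{T}_4$, and your treatment of the time-shift and of the rectangle-rule $1/M$ defect agrees with it. However, the mechanism you propose for the crucial term $\Delta\mx^{3/2}/\ve^4$ --- the step you yourself identify as the main obstacle --- is not the right one and would not close. For fixed $\ve$, Proposition \ref{prop:psi} is irrelevant: it quantifies the $\ve\to 0$ expansion of the kernel modes and enters only in the consistency analysis as $\ve\to 0$ (Proposition \ref{prop:cvRK1}), not in the consistency in $\mh$. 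In particular, the combination you suggest --- converting its $O(\ve^4)$ bound through the inverse inequality $\|\Delta^2 u\|_{L^2}\le C\,\Delta\mx^{-4}\|u\|_{L^2}$ and ``balancing'' an $\ve^4/\Delta\mx^4$-type quantity against the raw interpolation error --- produces nothing of the form $\Delta\mx^{3/2}/\ve^4$. The paper instead uses the exact identity $\mL^S_{n_x}[U]=\mI_{n_x}\left[\Psi_\ve\star U\right]$ for continuous $U$, so that the whole spatial error in the stiff bracket reduces to a pure interpolation error, controlled by the $H^2\to L^\infty$ estimate $\|U-\mI_{n_x}U\|_{L^\infty}\le C\,\|U\|_{H^2}\,\Delta\mx^{3/2}$ (Theorem 2.12 of \cite{HES}); no kernel-mode expansion is needed.

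The second, related gap is the source of the $\ve^{-4}$. You assert that the time-Taylor and interpolation errors are uniformly $O(\Delta t)$ and $O(\Delta\mx^{3/2})$ ``with bounds uniform in $\ve$'' thanks to the moment hypotheses, and then try to recover negative powers of $\ve$ from the operator prefactors alone. This is backwards: the fourth-moment bound gives no spatial regularity, and the derivatives of the exact solution are not uniformly bounded in $\ve$. The paper first establishes the regularity estimates $\left|\partial_t^2\V^\ve\right|+\left|\partial_t^2\W^\ve\right|+\left|\partial_t^2(\rho_0V^\ve)\right|\le C/\ve^4$ and $\left|\nabla_\mx^2\V^\ve\right|+\left|\nabla_\mx^2\W^\ve\right|+\left|\nabla_\mx^2(\rho_0V^\ve)\right|\le C/\ve^4$, and it is these blow-ups, multiplying the raw $\Delta t$ and $\Delta\mx^{3/2}$ errors (hence giving $\Delta t/\ve^4$ and $\Delta\mx^{3/2}/\ve^4$ in $\mathcal{T}_2$ and $\mathcal{T}_3$), that generate the $\ve$-dependence inside the parenthesis; the additional factor $(1+\ve^{-2})$ then comes solely from the $\ve^{-2}$ in front of the nonlocal term $\mathcal{T}_4$. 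Without these estimates on second space and time derivatives --- which are the actual content of the proof --- your argument either claims a uniform-in-$\ve$ consistency that cannot be justified, or assembles the powers of $\ve$ by an accounting (``dividing by $\ve^2$ twice'') that does not correspond to any step of a valid estimate.
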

\begin{proof}
According to \eqref{eq:kin-1} and \eqref{eq:char}, we get that there exists a positive constant independent of $\ve$ such that for all $t\in[0,T]$ and $\mz=(\mx,v,w)\in\R^{d+2}$,
$$\left|\dfrac{\partial^2}{\partial t^2}\V^\ve (t,\mz)\right| \,+\, \left|\dfrac{\partial^2}{\partial t^2}\W^\ve (t,\mz)\right| \,+\, \left|\dfrac{\partial^2}{\partial t^2}(\rho_0\,V^\ve) (t,\mx)\right| \,\leq\, \dfrac{C}{\ve^4}.$$
Thus, from a Taylor expansion of the solution to \eqref{eq:char} and
the first equation of \eqref{eq:macroeps}, the consistency error can
be bounded  as
$$
\mathcal{E}_{\mh}^n(\ve) \,\leq\, \mathcal{T}_1 \,+\, \mathcal{T}_2 \,+\,
\mathcal{T}_3 \,+\, \mathcal{T}_4 \,+\,C\dfrac{\Delta t}{\ve^4}.
$$
with the non stiff terms
$$
\left\{\begin{array}{l}
\mathcal{T}_1 :=\left\|\,
         A\left(\V^{\ve}(t^{n+1}),\W^\ve(t^n)\right) \,-\,
         A\left(\V^{\ve}(t^{n}),\W^\ve(t^n)\right) \,\right\|\,,
         \\ \, \\
\mathcal{T}_2 :=\ds\left\|\, N(\V^\ve(t^n)) \,-\,
         \mathcal{I}_{n_x}\left(N(\V^\ve(t^n))\right)
         \,\right\|\,,
         \\ \, \\
\mathcal{T}_3 := \left\| \frac{\rho_0(\mx)}{M}\sum_{p=1}^M \mI_{n_x}\left( N(\V^{\ve}(t^{n+1}))\right) - \int_{\R^2} N(\V^\ve(t^n))\,f^\ve_0(\md v',\md w') \right\|\,,
       \end{array}\right.
     $$
and the stiff term
$$\begin{array}{ll}
\mathcal{T}_4  &:= \dfrac{1}{\ve^2}\ds\left\|\,  \mI_{n_x}\left(\mL^S_{n_x}[\rho_0]\,\V^{\ve}(t^{n+1})\right) \,-\, \Psi_\ve\star[\rho_0]\,\V^\ve(t^n) \,\right\|   \\[0.9em]
& +\,\dfrac{1}{\ve^2}\,\left\| \mI_{n_x}\left(\mL^S_{n_x}[\rho_0\,V^\ve(t^n)]\right) \,-\, \Psi_\ve\star[\rho_0\,V^\ve] \,\right\|    \\[0.9em] 
& + \,  \dfrac{1}{\ve^2}\,\left\|  \,\mI_{n_x}\left(\mL^S_{n_x}[\rho_0]\,V^{\ve}(t^n)\right) \,-\, \Psi_\ve\star[\rho_0]\,V^\ve(t^n) \,\right\|\,.
\end{array}$$
First of all, for the first term, since second and fourth order moments of $f^\ve$  in $v$ and $w$ are uniformly bounded with respect to $\ve$, we directly get that
$$
\mathcal{T}_1 \,\leq\, C\,\Delta t,
$$
for some positive constant $C>0$. Then, in order to treat the second term, we use the estimate from Theorem 2.12 in \cite{HES} which yields that for all $U\in H^2(\B(0,S))$, there exists a constant $C>0$ such that 
$$\|U \,-\, \mathcal{I}_{n_x}(U)\|_{L^\infty(\B(0,S))}\,\leq\, C\,\|U\|_{H^2(\B(0,S))}\,\Delta\mx^{3/2}.$$
Since the initial data are regular enough, we get that there exists a positive constant independent of $\ve$ such that for all $t\in[0,T]$ and $\mz=(\mx,v,w)\in\B(0,S)\times\R^{2}$,
$$
\left|\nabla_\mx^2\V^\ve (t,\mz)\right| \,+\,
\left|\nabla_\mx^2\W^\ve (t,\mz)\right| \,+\, \left|\nabla_\mx^2
  (\rho_0\,V^\ve) (t,\mx)\right| \,\leq\, \dfrac{C}{\ve^4}.
$$
This leads to the estimate
$$
\mathcal{T}_2\,\leq\,C\,\dfrac{\Delta \mx^{3/2}}{\ve^4},
$$
for some positive constant $C>0$. As for the third term, we decompose it as follows:
\begin{align*}
\mathcal{T}_3 \,&\leq\,\ds\left\| \dfrac{\rho_0}{M}\sum_{p=1}^M|\mI_{n_x}\left( N(\V^{\ve}(t^{n+1})\right) - N(\V^{\ve}(t^{n+1}))| \right\|   \\
&+\,\left\| \dfrac{\rho_0}{M}\sum_{p=1}^M| N(\V^{\ve}(t^{n+1}) - N(\V^{\ve}(t^{n})| \right\| \\
&+\,\left\| \dfrac{\rho_0}{M}\sum_{p=1}^MN(\V^{\ve}(t^{n}) -\int_{\R^2} N(\V^\ve(t^n))\,f^\ve_0(\md v'\,\md w' )\right\|\,.
\end{align*}
The first term can be treated as previously with the estimate from
\cite{HES}. Then, the second term is of order $\Delta t$. Finally, the
third term corresponds to an approximation of the integral term with
the rectangle rule, hence it is of order $1/M$. Consequently, we get that
$$
\mathcal{T}_3\,\leq\,C\left( \dfrac{\Delta\mx^{3/2}}{\ve^4}\,+\,\Delta t
  \,+\, \dfrac{1}{M} \right),
$$
for some positive constant $C$. As for the final term $\mathcal{T}_4$, we notice that for all $U\in \mathcal{C}(\B(0,S))$, 
$$
\mL^S_{n_x}[U] \,=\, \mI_{n_x}\left[ \Psi_\ve\star U \right].
$$
Therefore, we want to use the estimate form \cite{HES} here again. This gives us that there exists a positive constant $C$ such that
$$
\mathcal{T}_4  \,\leq\, \dfrac{C}{\ve^2}\left( \dfrac{\Delta\mx^{3/2}}{\ve^4}\,+\,\Delta t \,+\, \dfrac{1}{M}\right).
$$
Consequently, gathering the previous results, we get that there exists a positive constant $C$ independent of $\mh$ and $\ve$ such that if $\|\mh\|$ is small enough,
$$
\mathcal{E}_{\mh}^n(\ve)\,\leq \, C\,\left( 1 + \frac{1}{\ve^2}\right)\left( \dfrac{\Delta\mx^{3/2}}{\ve^4}\,+\,\Delta t \,+\, \dfrac{1}{M}\right).
$$ 
\end{proof}
}

The lack of uniform bounds, with respect to $\ve$ and $\mh$, on the
numerical solution does not allow us to complete the rigorous analysis
of the asymptotic preserving scheme
\eqref{eq:RK1}--\eqref{eq:RK1J}. Furthermore, Proposition \ref{prop:cvRK1}
only gives a convergence result without any error estimate as $\widetilde{\mathcal{E}}_\mh(\ve)$ in
\eqref{disc:anal}. However,  Proposition  \ref{prop:cvRK1} indicates that in the limit $\ve\rightarrow 0$,  the
numerical scheme \eqref{eq:RK1}--\eqref{eq:RK1J} becomes a first
order explicit  time approximation with respect to $\Delta t$ of the reaction-diffusion system
\eqref{eq:macro}, hence it constitutes together with Lemma \ref{lem:cons1}
a first direction to provide a proof of the asymptotic preserving
property \eqref{ap:anal}.

\subsubsection*{A second order implicit-explicit Runge-Kutta scheme}

Now let us adapt the previous strategy to a second order implicit-explicit Runge-Kutta  scheme for the system \eqref{eq:chardiscret}. We propose a combination of Heun's method for the explicit part, and an A-stable second order singly diagonally implicit Runge-Kutta (SDIRK) method for the implicit part. According to the classification from \cite{BOS}, we call it H-SDIRK2 (2,2,2). 

For all $n\in\mathbb{N}$ and $p\in\{1\ldots M\}$, we apply a first stage,
\begin{equation}
  \label{eq:RK2-1}
  \left\{
    \begin{array}{ll}
                     \ds \V_{p}^{(1)} &= \V_{p}^{n} \,+\, \\ \,\\
      & \ds\frac{\Delta t}{2}\left[ \mI_{n_x}\left(N(\V_{p}^{n})\right)\,+\, \frac{1}{\ve^2}\,\left[\mL^S_{n_x}[\rho_0V_M^n]  \,-\, \mI_{n_x}\left(\V_p^{(1)}\,\mL^S_{n_x}[\rho_0]\right) \right] \,-\,
           \W_{p}^{n} \right]\,,
           \\\,\\
           \ds \W_{p}^{(1)} &= \W_{p}^{n} \,+\,  \frac{\Delta t}{2}\,A\left( \V_{p}^{(1)},\W_{p}^{n} \right),
 \end{array}\right.
\end{equation}
Hence we  compute the  additional variable ${V}_M^{(1)}\in \mS_{n_x}$  solution {of} the following scheme
\begin{eqnarray}
  \label{eq:RK2J-1}
  &&V^{(1)}_M \,=\, V^{n}_M \,+\,
  \\ &&\ds \frac{\Delta t}{2}\left[ \dfrac{1}{M}\underset{p=1}{\overset{M}{\sum}}\mI_{n_x}\left( N(\V_{p}^{(1)})\right) \,+\,  \frac{1}{\ve^2}\,\left[\mL^S_{n_x}[\rho_0V_M^n]  \,-\, \mI_{n_x}\left(V_M^{n}\,\mL^S_{n_x}[\rho_0]\right) \right]\,-\, W_{M}^{n}\right]\,,
\nonumber
\end{eqnarray}
with
$$
W_M^{n} := \frac{1}{M} \;\sum_{p=1}^M \W_p^{n}. 
$$
Then, we set
$$
\left\{\begin{array}{l}
       \hat\V_p^{(1)} \,\,=\,\, 2\;\V_{p}^{(1)}\,-\,\V_{p}^{n}, \\[0.9em]
       \hat\W_p^{(1)} \,\,=\,\, 2\,\W_{p}^{(1)}\,-\,\W_{p}^{n},\\[0.9em]
       \hat V_M^{(1)} \,\,=\,\, 2 \,V_{M}^{(1)}\;-\; V_{M}^{n}
       \end{array}\right.
$$
and compute the second stage with a semi-implicit step on $(\V_p^{(2)},\W_p^{(2)})$,
\begin{equation}
  \label{eq:RK2-2}
  \left\{
    \begin{array}{ll}
           \ds \V_{p}^{(2)} &= \V_{p}^{n} \,+\, \\ \, \\ & \ds\frac{\Delta t}{2}\left[ \mI_{n_x}\left(N(\hat\V_p^{(1)})\right)\,+\, \frac{1}{\ve^2}\,\left[\mL^S_{n_x}[\rho_0\hat V_M^{(1)}]  \,-\, \mI_{n_x}\left(\V_p^{(2)}\,\mL^S_{n_x}[\rho_0]\right) \right]\,-\,
           \hat\W_{p}^{(1)} \right]\,,
           \\\,\\
           \ds \W_{p}^{(2)} &= \W_{p}^{n} \,+\,  \frac{\Delta t}{2}\,A\left( \V_{p}^{(2)},\hat\W_{p}^{(1)} \right),
 \end{array}\right.
\end{equation}
Moreover, ${V}_M^{(2)}\in \mS_{n_x}$ is given by
\begin{eqnarray}
  \label{eq:RK2J-2}
  &&V^{(2)}_M = V^{n}_M \,+\,
  \\
  &&\ds \frac{\Delta t}{2}\left[ \dfrac{1}{M}\underset{p=1}{\overset{M}{\sum}}\mI_{n_x}\left( N(\V_{p}^{(2)})\right) \,+\,  \frac{1}{\ve^2}\,\left[\mL^S_{n_x}[\rho_0\hat V_M^{(1)}]  \,-\, \mI_{n_x}\left(\hat V_M^{(1)}\,\mL^S_{n_x}[\rho_0]\right) \right]\,-\, \hat W_{M}^{(1)}\right]\,,
\nonumber
\end{eqnarray}
where $\hat W_M^{(1)}= 2 W_M^{(1)}- W_M^n$ and
$$
W_M^{(1)} := \frac{1}{M} \;\sum_{p=1}^M \W_p^{(1)}. 
$$
Finally, we get the numerical solution at time $t^{n+1}$ through
\begin{equation}
  \label{eq:RK2-3}
\left\{\begin{array}{l}
\V^{n+1}_{p} \,=\, \V^{(1)}_{p} \,+\, \V^{(2)}_{p} \,-\, \V^{n}_{p}, \\ \, \\
\W^{n+1}_{p} \,=\, \W^{(1)}_{p} \,+\, \W^{(2)}_{p} \,-\, \W^{n}_{p}, \\ \, \\
V^{n+1}_{M} \,=\, V^{(1)}_{M} \,+\, {V}^{(2)}_{M} \,-\, V^{n}_{M}.
\end{array}\right.
\end{equation}
Now, we prove an analogous result to Proposition \ref{prop:cvRK1} for
the numerical scheme \eqref{eq:RK2-1}--\eqref{eq:RK2-3} as the
parameter $\ve$ goes to $0$ with a fixed  numerical parameter
$\mh\in\R^3$ given by $\mh=(\Delta t, \Delta x, 1/M)$, where
$\Delta x = 2\pi/n_x$, $\Delta t>0$  and  $M\in\N^*$.

\begin{prop}[Consistency when $\ve\rightarrow 0$] 
  \label{prop:cvRK2}
  Let $\mh$ to be  fixed and consider
a connectivity kernel  $\Psi:\R^+\rightarrow\R^+$ satisfying \eqref{hyp:psi},
\eqref{hyp:psi4} and a neuron density $\rho_0\in \mS_{n_x}$ satisfying
\eqref{hyp:rho} at each grid point $\mx_j$, $j\in\J_{n_x}$. For all $\ve>0$, $p\in\{1,\ldots,M\}$ and $n\in\N$,  let us assume that the triplet $(\V_p^{\ve,n},
\,\W_p^{\ve,n}, V_M^{\ve, n})$   given by \eqref{eq:RK2-1}--\eqref{eq:RK2-3}
is uniformly bounded with respect to $\ve>0$. Then we define
$$
W_M^{\ve, n} = \frac{1}{M} \sum_{p=1}^M \W_p^{\ve,n}
$$
and for all $j\in\J_{n_x}$, $(V_M^{\ve,n},W_M^{\ve,n})(\mx_j)$ converges to
$(\overline V^n_M,\,\overline W^n_M)(\mx_j)$,  as $\ve$ goes to $0$,
solution {of}
\begin{equation}
  \label{eq:RK2lim-1}
\left\{\begin{array}{ll}
         \overline V^{(1)}_M & =\, \overline V^{n}_M \,+\\ \;\\ & \ds\dfrac{\Delta t}{2}\left[\mI_{n_x}\left( N\left(\overline V_M^{n}\right)\right)
         - \overline W_M^n + \overline{\sigma}  \left( \Delta
        \mI_{n_x}\left(\rho_0\,\overline V^n_M\right)
         - \mI_{n_x}\left(\overline V_M^n \Delta\rho_0  \right)\right)\right], \\ \, \\
\overline W^{(1)}_M &=\, \overline W^{n}_M \,+\, \dfrac{\Delta t}{2}\,A\left( \overline V^n_M , \overline W^n_M\right), 
\end{array}\right.
\end{equation}
where the second stage is given by
\begin{equation}
  \label{eq:RK2lim-2}
\left\{\begin{array}{ll}
              \overline  V^{(2)}_M &=\, \overline V^{n}_M \,+\\ \,\\ &\ds \dfrac{\Delta t}{2}\left[ N\left(\hat V^{(1)}_M \right) -\hat W^{(1)}_M + \overline{\sigma}  \left( \Delta
        \mI_{n_x}\left(\rho_0\hat V^{(1)}_M\right)
        - \mI_{n_x}\left(\hat V_M^{(1)} \Delta\rho_0  \right)\,\right)\right]
\\ \, \\
      W^{(2)}_M &=\, W^{n}_M \,+\, \dfrac{\Delta t}{2}\,A\left(\hat V^{(1)}_M , \hat W^{(1)}_M\right), 
\end{array}\right.
\end{equation}
where $\hat  V^{(1)}_M = 2  \overline V^{(1)}_M -  \overline V^{n}_M$,  $\hat  W^{(1)}_M = 2  \overline W^{(1)}_M -  \overline W^{n}_M$. The next time step is given by
\begin{equation}\label{eq:RK2lim-3}
\left\{\begin{array}{l}
V^{n+1}_{M} \,=\, V^{(1)}_{M} \,+\, V^{(2)}_{M} \,-\, V^{n}_M, \\ \, \\
W^{n+1}_{M} \,=\, W^{(1)}_{M} \,+\, W^{(2)}_{M} \,-\, W^{n}_{M}.
\end{array}\right.
\end{equation}
\end{prop}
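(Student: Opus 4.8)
The plan is to repeat the argument used for Proposition \ref{prop:cvRK1}, stage by stage, exploiting the singular $\ve^{-2}$ term to force synchronisation of the microscopic potentials and the second order expansion of Proposition \ref{prop:psi} to recover the discrete diffusion operator. Fix $n\in\N$ and evaluate everything at the grid points $(\mx_j)_{j\in\J_{n_x}}$. Since, by hypothesis, the family $(\V_p^{\ve,n},\W_p^{\ve,n},V_M^{\ve,n})_{\ve>0}$ is uniformly bounded, and since the three Runge--Kutta stages \eqref{eq:RK2-1}--\eqref{eq:RK2J-1}, \eqref{eq:RK2-2}--\eqref{eq:RK2J-2} and \eqref{eq:RK2-3} express $(\V_p^{(1)},\W_p^{(1)},V_M^{(1)})$, $(\V_p^{(2)},\W_p^{(2)},V_M^{(2)})$ and $(\V_p^{n+1},\W_p^{n+1},V_M^{n+1})$ continuously in terms of bounded quantities, all these intermediate quantities are uniformly bounded as well; one may therefore extract a subsequence (still labelled by $\ve$) along which each of them converges. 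Because the limiting equations \eqref{eq:RK2lim-1}--\eqref{eq:RK2lim-3} characterise the limit uniquely, proving convergence of an arbitrary subsequence to that limit will give convergence of the full family, and then one concludes by (finite) induction on $n$.

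\textbf{First stage.} Multiplying the first line of \eqref{eq:RK2-1} by $\ve^2$ and letting $\ve\to0$, the left-hand side together with $\ve^2\mI_{n_x}(N(\V_p^{n}))$ and $\ve^2\W_p^{n}$ vanish, so that $\mL^S_{n_x}[\rho_0 V_M^{n}](\mx_j)-\V_p^{(1)}(\mx_j)\,\mL^S_{n_x}[\rho_0](\mx_j)\to0$ for every $j\in\J_{n_x}$. Applying Proposition \ref{prop:psi} at zeroth order to $\rho_0\in\mS_{n_x}$ and to $\mI_{n_x}(\rho_0 V_M^{\ve,n})\in\mS_{n_x}$ (and using that $\mL^S_{n_x}$ depends only on grid values, hence $\mL^S_{n_x}[\rho_0 V_M^{n}]=\mL^S_{n_x}[\mI_{n_x}(\rho_0 V_M^{n})]$), we get $\mL^S_{n_x}[\rho_0](\mx_j)\to\overline\Psi\,\rho_0(\mx_j)$ and $\mL^S_{n_x}[\rho_0 V_M^{n}](\mx_j)\to\overline\Psi\,\rho_0(\mx_j)\,\overline V_M^{n}(\mx_j)$; therefore the limit of $\V_p^{(1)}(\mx_j)$ is independent of $p$ and equals $\overline V_M^{n}(\mx_j)$ where $\rho_0(\mx_j)>0$ and $0$ otherwise. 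Substituting into \eqref{eq:RK2J-1}, one then expands $\mL^S_{n_x}[\rho_0 V_M^{n}]-\mI_{n_x}(V_M^{n}\mL^S_{n_x}[\rho_0])$ with Proposition \ref{prop:psi} at second order: writing it as $\bigl(\mL^S_{n_x}[\rho_0 V_M^{n}]-\overline\Psi\,\mI_{n_x}(\rho_0 V_M^{n})\bigr)-\mI_{n_x}\!\bigl(V_M^{n}(\mL^S_{n_x}[\rho_0]-\overline\Psi\rho_0)\bigr)$, dividing by $\ve^2$ and passing to the limit yields $\overline\sigma\,\bigl[\Delta\mI_{n_x}(\rho_0\overline V_M^{n})-\mI_{n_x}(\overline V_M^{n}\Delta\rho_0)\bigr]$. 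Together with $\tfrac1M\sum_p\mI_{n_x}(N(\V_p^{(1)}))\to\mI_{n_x}(N(\overline V_M^{n}))$ this gives the first line of \eqref{eq:RK2lim-1}; averaging the second line of \eqref{eq:RK2-1} over $p$ and using the linearity of $A$ gives the second line. In particular $\hat V_M^{(1)}=2\overline V_M^{(1)}-\overline V_M^{n}$ and $\hat W_M^{(1)}=2\overline W_M^{(1)}-\overline W_M^{n}$ are the limits of the predictor quantities.

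\textbf{Second stage and conclusion.} The same reasoning applied to \eqref{eq:RK2-2}--\eqref{eq:RK2J-2} gives, from the $\ve^2$-rescaled first equation, that $\V_p^{(2)}(\mx_j)$ converges (independently of $p$) to $\hat V_M^{(1)}(\mx_j)$ where $\rho_0(\mx_j)>0$; then, expanding $\mL^S_{n_x}[\rho_0\hat V_M^{(1)}]-\mI_{n_x}(\hat V_M^{(1)}\mL^S_{n_x}[\rho_0])$ with Proposition \ref{prop:psi} at second order (applied to $\mI_{n_x}(\rho_0\hat V_M^{(1)})\in\mS_{n_x}$ and to $\rho_0$), dividing by $\ve^2$ and letting $\ve\to0$ produces $\overline\sigma\,\bigl[\Delta\mI_{n_x}(\rho_0\hat V_M^{(1)})-\mI_{n_x}(\hat V_M^{(1)}\Delta\rho_0)\bigr]$, so that $V_M^{(2)}\to\overline V_M^{(2)}$ solves the first line of \eqref{eq:RK2lim-2}, while averaging the $\W_p^{(2)}$-equation over $p$ gives the second line. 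Finally, passing to the limit in \eqref{eq:RK2-3} and using $W_M^{\ve,n}=\tfrac1M\sum_p\W_p^{\ve,n}$ yields \eqref{eq:RK2lim-3}, and uniqueness of the limit point upgrades subsequential convergence to convergence of the whole family. The main obstacle is the second stage: one must first establish the genuine synchronisation of the $\V_p^{(2)}$ (their common $\ve\to0$ limit being $\hat V_M^{(1)}$) before identifying the limit of the nonlinear term $\tfrac1M\sum_p\mI_{n_x}(N(\V_p^{(2)}))$, and one must be able to invoke the second order Taylor expansion of Proposition \ref{prop:psi} for the degree-doubled products $\rho_0\hat V_M^{(1)}$, which is handled by first projecting onto $\mS_{n_x}$ and using that $\mL^S_{n_x}$ only sees grid values; the rest is the routine bookkeeping of stacking the two Runge--Kutta stages already carried out in Proposition \ref{prop:cvRK1}.
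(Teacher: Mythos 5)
Your proposal is correct and follows essentially the same route as the paper, whose proof simply consists in repeating the argument of Proposition \ref{prop:cvRK1} stage by stage: rescale the implicit relation by $\ve^2$ to obtain synchronisation of the $\V_p^{(1)}$ (resp. $\V_p^{(2)}$) towards $\overline V_M^{n}$ (resp. $\hat V_M^{(1)}$) where $\rho_0>0$, use Proposition \ref{prop:psi} at second order on the explicit stiff terms of \eqref{eq:RK2J-1} and \eqref{eq:RK2J-2} to recover the discrete diffusion operator, average the $\W_p$-equations using the linearity of $A$, and conclude by uniqueness of the limit point. Your additional remarks (projecting $\rho_0 V_M$ onto $\mS_{n_x}$ before invoking Proposition \ref{prop:psi}, and establishing synchronisation of the $\V_p^{(2)}$ before passing to the limit in the nonlinear term) are exactly the points the paper handles implicitly by referring back to Proposition \ref{prop:cvRK1}.
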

\begin{proof}
We fix a time step $\Delta t >0$, a
set of equidistant points $(\mx_j)_{j\in\J_{n_x}}\subset \T$ and
$p\in\{1,\ldots,M\}$.   Then we denote by $\left(\V_p^{\ve,n},\,\W_p^{\ve,n},\,
  V_M^{\ve,n} \right)_{\ve>0}$ the solution {of}
\eqref{eq:RK2-1}--\eqref{eq:RK2-3}. Up to a sub-sequence,  $\left(\V_p^{\ve,n},\,\W_p^{\ve,n},\,
  V_M^{\ve,n} \right)_{\ve>0}$ converges to $\left(\overline\V_p^{n},\,
  \overline\W_p^{n},\, \overline V_M^{n}\right)$ when $\ve\rightarrow
0$, hence we proceed exactly as in Proposition \ref{prop:cvRK1}  and set
$$
W_M^{\ve,(1)} = \frac{1}{M} \sum_{p=1}^M \W_p^{\ve,(1)}  \underset{\ve\rightarrow 0}\longrightarrow \overline W_M^{(1)} .
$$
Thus, we prove that $\left(\overline V_M^{(1)}, \overline
  W_M^{(1)}\right)$ corresponds to the solution of the first stage
\eqref{eq:RK2lim-1} and  we have
$$
\left\{\begin{array}{l}
         \overline V_M^{(1)} \,\,=\,\, 2\, \overline V_M^{(1)} \,-\, \overline V_M^{n}, \\ \, \\
         \overline W_M^{(1)} \,\,=\,\, 2\,\overline W_M^{(1)} \,-\, \overline W_M^{n}.
         \end{array}\right.
$$
Furthermore,  we treat the second stage in the same manner for any $j\in\J_{n_x}$ and $p\in\{1,\ldots,M\}$, the limit $\overline\V_p^{(2)}(\mx_j)$ does not depend on $p$ and is given by
 $$
 \overline\V_p^{(2)}(\mx_j) \,=\,
 \left\{
   \begin{array}{ll}
     \hat V_M^{(1)}(\mx_j),  & {\rm if }\, \rho_0(\mx_j)>0,
     \\[0.9em]
     0, & {\rm else.}
 \end{array}\right.
$$
Passing to the limit {as} $\ve\rightarrow 0$ in \eqref{eq:RK2J-2} and in the second equation in \eqref{eq:RK2-2}, it yields that $(\overline V_M^{(2)}, \overline W_M^{(2)})$ satisfies \eqref{eq:RK2lim-2} and finally  \eqref{eq:RK2lim-3}.
\end{proof}

Let us notice that the present strategy can be applied to a large
class of second order schemes and can also be extended to a third
order semi-implicit scheme. We refer to \cite{BOS} for the detailed
description of the schemes.

{
Now, let us investigate the consistency with respect to the numerical parameter $\mh$. 
\begin{lem}[Consistency $\mh\rightarrow 0$]
  \label{lem:cons2}
Let $0<\ve<1$ be a fixed parameter and consider $\Psi$ satisfying \eqref{hyp:psi}, and  $\rho_0$ satisfying \eqref{hyp:rho} and such that
$\text{Supp}(\rho_0)\subset \mathcal{B}(0,S)$ where $S>0$.  We suppose
that $f^\ve$ the solution of \eqref{eq:kin-1}--\eqref{eq:kin-2} is
differentiable twice with respect to time and there exists a
constant $C_T>0$, independent of $\ve$, such that in $[0,T]\times \mathcal{B}(0,S)$,
$$
\left\|\int_{\R^2} \left( v^4 + w^4\right)\, f^\ve(., \md v,\md w)
\right\|_{L^\infty} \,+\, \| V^\ve \|_{L^\infty} \,\leq \, C_T\,.
$$
Consider the scheme  \eqref{eq:RK2-1}--\eqref{eq:RK2-3} and the consistency error
$\mathcal{E}_{\mh}^n$ in \eqref{def:F1}. Then, there exists another positive constant $C>0$, independent of
$\mh$ and $\ve$, such
that for all $n\in \{0,\ldots\,, [T/\Delta t]\}$, 
$$
\mathcal{E}_{\mh}^n(\ve)\,\leq \, C\,\left( 1 + \frac{1}{\ve^2}\right)\left(
  \dfrac{\Delta\mx^{3/2}}{\ve^4}\,+\,\Delta t^2 \,+\,
  \dfrac{1}{M}\right).
$$
\end{lem}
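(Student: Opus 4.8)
The plan is to mirror the proof of Lemma \ref{lem:cons1}, exploiting the fact that the H-SDIRK2 scheme \eqref{eq:RK2-1}--\eqref{eq:RK2-3} is a second order Runge--Kutta discretization in time of the same system \eqref{eq:chardiscret}, so only the contribution of the time step should improve from $\Delta t$ to $\Delta t^2$, while the space ($\Delta\mx^{3/2}/\ve^4$) and particle ($1/M$) error contributions remain unchanged. First I would insert the exact flow $(\V^\ve,\W^\ve)$ of \eqref{eq:char} and the macroscopic potential $V^\ve$ defined in \eqref{def:macro} into the operator $\F_1$ defining the consistency error \eqref{def:F1}, after rewriting the two-stage update \eqref{eq:RK2-3} as a single one-step formula; this amounts to computing the local truncation error of the Heun/SDIRK pair applied to the ODE/PDE system \eqref{eq:char}--\eqref{eq:macroeps}. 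Using that $f^\ve$ is twice differentiable in time and that, as in Lemma \ref{lem:cons1}, the second time derivatives of $\V^\ve$, $\W^\ve$ and $\rho_0 V^\ve$ are bounded by $C/\ve^4$ (in fact here I would need third time derivatives, bounded by $C/\ve^6$, to get the $\Delta t^2$ rate, so the stiff time term contributes $C\,\Delta t^2/\ve^6$, absorbed in the announced bound since $1+1/\ve^2$ multiplies $\Delta t^2$ and $1/\ve^4$ dominates — one must be a little careful here that the stated estimate is compatible, using $0<\ve<1$), the time-quadrature error of the two-stage scheme is $O(\Delta t^2)$ times these derivative bounds.

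Next, I would split the consistency error, exactly as in the proof of Lemma \ref{lem:cons1}, into non-stiff terms $\mathcal{T}_1,\mathcal{T}_2,\mathcal{T}_3$ coming respectively from the linear term $A$, from the interpolation error on $N(\V^\ve)$, and from the particle approximation of $\int_{\R^2} N(\V^\ve)\,f_0^\ve\,\md v'\md w'$, plus a stiff term $\mathcal{T}_4$ coming from the nonlocal operator $\mL^S_{n_x}$. The bound $\mathcal{T}_1\le C\Delta t$ (actually $C\Delta t^2$ here, from the second-order accuracy of the Heun step on the $w$-equation) uses the uniform fourth-moment bound on $f^\ve$; the bounds $\mathcal{T}_2\le C\Delta\mx^{3/2}/\ve^4$ and $\mathcal{T}_3\le C(\Delta\mx^{3/2}/\ve^4+\Delta t+1/M)$ follow verbatim from Theorem 2.12 in \cite{HES} (the interpolation estimate $\|U-\mathcal{I}_{n_x}U\|_{L^\infty(\B(0,S))}\le C\|U\|_{H^2(\B(0,S))}\Delta\mx^{3/2}$) together with the bound $|\nabla_\mx^2 \V^\ve|+|\nabla_\mx^2\W^\ve|+|\nabla_\mx^2(\rho_0 V^\ve)|\le C/\ve^4$, and from recognizing the $1/M$ term as a rectangle-rule quadrature error. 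Finally $\mathcal{T}_4\le (C/\ve^2)(\Delta\mx^{3/2}/\ve^4+\Delta t+1/M)$ (here with $\Delta t^2$ in place of $\Delta t$ where the time error enters) using $\mL^S_{n_x}[U]=\mathcal{I}_{n_x}[\Psi_\ve\star U]$ and again the interpolation estimate of \cite{HES}. Summing all contributions and using $0<\ve<1$ to absorb lower powers of $1/\ve$ gives the announced bound.

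The main obstacle, compared to the first-order case, is bookkeeping the second-order local truncation error of the two-stage H-SDIRK2 scheme written in the "intermediate-variable-doubling" form $\hat\V^{(1)}_p=2\V^{(1)}_p-\V^n_p$ etc.: one must verify that, when the exact solution of \eqref{eq:char}--\eqref{eq:macroeps} is substituted, the stage combination \eqref{eq:RK2-3} reproduces a genuine order-two Taylor expansion, and in particular that the implicit treatment of the stiff linear term $\tfrac{1}{\ve^2}\mathcal{I}_{n_x}(\V^{n+1}_p\mL^S_{n_x}[\rho_0])$ against the explicit treatment of $\mathcal{I}_{n_x}(N(\cdot))$ does not destroy the $O(\Delta t^2)$ accuracy — this is where the SDIRK coefficients and the Heun weights must match, and where third time derivatives bounded by $O(1/\ve^6)$ enter. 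Beyond that, the argument is a routine adaptation of the proof of Lemma \ref{lem:cons1}; one should also note that, as there, the lack of uniform-in-$(\ve,\mh)$ bounds on the numerical solution itself means this is only a consistency statement, not a full convergence/error estimate.
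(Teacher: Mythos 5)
Your proposal takes essentially the same approach as the paper: the paper's proof of this lemma is literally the one-line remark that it uses the same tools as Lemma \ref{lem:cons1}, and your decomposition into $\mathcal{T}_1,\dots,\mathcal{T}_4$ with the interpolation estimate from \cite{HES}, the rectangle-rule $1/M$ term, and a second-order Taylor expansion in time for the Heun/SDIRK pair is exactly that argument spelled out. Your added caution about the stiff $O(\Delta t^2/\ve^6)$ time-truncation contribution and how it sits inside the stated bound is more careful than anything the paper records, but it does not change the route.
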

\begin{proof}
  The proof uses the same tools as in the proof of Lemma \ref{lem:cons1}.
\end{proof}

As in the previous section,  Proposition  \ref{prop:cvRK2} indicates that in the limit $\ve\rightarrow 0$,  the
numerical scheme \eqref{eq:RK2-1}--\eqref{eq:RK2-3} becomes a second
order explicit  time approximation with respect to $\Delta t$ of the reaction-diffusion system
\eqref{eq:macro}. Applying  Lemma \ref{lem:cons2}, we may conjecture
that the asymptotic preserving
property \eqref{ap:anal} is satisfied.

In the following section, we provide some numerical evidences on  this issue.

\section{Numerical simulations}
\label{sec:num}
\setcounter{equation}{0}

In this section, we provide examples of numerical computations to validate and compare the different time discretization schemes  \eqref{eq:RK1}--\eqref{eq:RK1J} and \eqref{eq:RK2-1}--\eqref{eq:RK2-3} introduced in the previous  section.

First of all, we focus on the order of accuracy when $\ve$ is fixed
and the numerical parameter $\mh$ goes to zero. Then we study the
behaviour of the numerical solutions for a fixed $\mh$ and in the limit $\ve\rightarrow 0$, to show the convergence towards the solutions of the approximations \eqref{eq:RK1lim} and \eqref{eq:RK2lim-1}--\eqref{eq:RK2lim-3} of the reaction-diffusion system \eqref{eq:macro}.

Then, we display some simulations of the behaviour of a solution {of} \eqref{eq:kin-1}--\eqref{eq:kin-2} with a heterogeneous neuron density, and finally, we show some two-dimensional dynamics.

Throughout this section, except for the first subsection, we fix the parameter of the nonlinearity $N$ to $\theta=0.1$ and the other constants to $\tau=0.005$ and $\gamma=5$, expect in the first subsection. This framework corresponds to the ``excitable'' regime of the well-known FHN reaction-diffusion system \eqref{eq:FHNreac}. Therefore,  the system only admits  one steady state which is the stable fixed point $0$, and according to \cite{CAR}, $\tau$ is small enough so that the solution {of} \eqref{eq:FHNreac} exhibits slow/fast dynamics like traveling pulses.

Moreover, as for the connectivity kernel, we use the following truncated gaussian function
\begin{equation}
  \label{gauss}
\Psi(\|\mz\|)\,=\;\dfrac{1}{(2\,\pi\,\sigma_0)^{d/2}}\,\exp\left( -\dfrac{\|\mz\|^2}{2\,\sigma_0} \right),
\end{equation}
with  $\sigma_0=0.005$ such that we have in \eqref{hyp:psi},
$$
\overline{\Psi} = 1 \quad{\rm and}\quad \overline{\sigma}= \frac{\sigma_0}{2}. 
$$

\subsection{Order of accuracy in the numerical parameters}

In this subsection, we aim to verify the order of accuracy  of our
numerical methods proposed in Section \ref{sec:particle} with respect
to the numerical parameters $\mh=(\Delta t,\Delta x,1/M)$, when it
goes to zero. We consider a simplified version of the nonlocal transport equation \eqref{eq:kin-1} with $N(v)=-\alpha v$ and $\tau=0$, that is, for $t>0$ and $\mx\in\R$
\begin{equation}\label{eq:fsimple}
\left\{\begin{array}{l}
\partial_t f^\ve \,+\,\partial_v\left( f^\ve\left[ -\alpha\,v \,-w\,+\, \K_\ve[f^\ve] \right] \right) \,=\,0, \\ \, \\
f^\ve|_{t=0}(\mx,v,w) \,=\,\delta_{V_0(\mx)}(v) \otimes \delta_{0}(w),
\end{array}\right.
\end{equation}
with $V_0$ given by
$$
V_0(\mx)\,=\,\exp\left( -100\,|\mx|^2  \right),\quad \mx \in\R.
$$
Consequently, in this configuration, we have $\rho_0\equiv 1$, and the solution {of} \eqref{eq:kin-1}--\eqref{eq:kin-2} is given by $f^\ve = \delta_{V^\ve}(v)\,\otimes \delta_0(w)$ where $V^\ve$ is the unique solution {of} the following reaction-diffusion equation for $t>0$ and $\mx\in\R$,
$$\left\{\begin{array}{l}
\partial_t V^\ve \,-\, \dfrac{1}{\ve^2}\left( \Psi_\ve\star V^\ve \,-\,
           \overline{\Psi}\,V^\ve \right) \,=\, -\alpha\,V^\ve,
           \\ \, \\
V^\ve(0,\mx)\,=\,V_0(\mx),\quad \mx\in\R.
         \end{array}\right.
       $$
{Thus, the parameter $\alpha>0$ determines the rate of convergence of $V^\ve$ towards the stable state $0$.}
Since the term $N(V^\ve)$ is now linear, the macroscopic equation on $V^\ve$ is also linear (even if the equation on $f^\ve$ is not) and we can exhibit an explicit solution using a Fourier transform in space. It yields that,
$$
\widehat{V}^\ve(t,\xi) \,=\, \widehat{V_0}(\xi)\,\exp\left( \left[
    -\alpha\,+\,\dfrac{1}{\ve^2}\left( \widehat{\Psi}_\ve(\xi) - \overline{\Psi}
    \right) \right]\,t \right).
$$
where we choose the parameter $\alpha = 0.001$, and the scaling parameter
$\ve=1$. The domain in space is taken to be $(-1,1)$. We compute an approximation of the error on the macroscopic quantity $V^\ve$ at each time step
$$
\mathcal{E}^n \,=\, \| V^{\ve,n}_M  - V^\ve(t^n)\|_{L^2}, \quad n=0,\ldots N_T,  
$$
with $N_T=[T/\Delta t]$. In Table \ref{tab:01} and \ref{tab:02}, we report the numerical error for different values of $\mh$ at fixed time $T=10$ for the numerical schemes \eqref{eq:RK1}--\eqref{eq:RK1J} ({first table}) and \eqref{eq:RK2-1}--\eqref{eq:RK2-3} ({second table}). A linear regression yields that these numerical methods seems to be respectively first and second order in $\mh$. Therefore, with this parametrization, the order of accuracy  corresponds to the one given by the time discretization, whereas the error due to the spectral discretization is negligible.

\begin{table}
  \begin{center}
\begin{tabular}{|c|c|c|}
  \hline\makebox[3em]{$\|\mh\|$}&\makebox[12em]{$L^2$ error for \eqref{eq:RK1}--\eqref{eq:RK1J} } &
                                                        \makebox[3em]{Order}
  \\\hline\hline
  1.e-01 & 5.48e-04 & XXX
  \\\hline
  5.e-02 & 2.73e-04 & 1.63
  \\\hline
  2.e-02 & 1.09e-04 & 1.00
  \\\hline
  1.e-02 & 5.47e-05 & 1.00
  \\\hline
  5.e-03 & 2.73e-05 & 1.00
  \\\hline
  2.e-03 & 1.09e-05 & 1.00
  \\\hline
  1.e-03 & 5.47e-06 & 1.00
  \\\hline
  5.e-04 & 2.73e-06 & 1.00
  \\\hline
  \end{tabular} 
\end{center}
\caption{{\bf Order of accuracy in $\|\mh\|\rightarrow 0$:} evaluation of the
  numerical  error at fixed time $T=10$ of the numerical schemes \eqref{eq:RK1}--\eqref{eq:RK1J}.}
\label{tab:01}
\end{table}

\begin{table}
  \begin{center}
\begin{tabular}{|c|c|c|}
  \hline\makebox[3em]{$\|\mh\|$}&\makebox[12em]{$L^2$ error  for \eqref{eq:RK2-1}--\eqref{eq:RK2-3}} &
                                                        \makebox[3em]{Order}
  \\\hline\hline
  1.e-01 & 1.23e-07 & XXX
  \\\hline
  5.e-02 & 3.56e-08 & 2.69
  \\\hline
  2.e-02 & 8.35e-09 & 2.02
  \\\hline
  1.e-02 & 2.07e-08 & 2.01
  \\\hline
  5.e-03 & 5.01e-09 & 2.01
  \\\hline
  2.e-03 & 1.23e-09 & 2.01
  \\\hline
  1.e-03 & 2.95e-10 & 2.01
  \\\hline
  5.e-04 & 2.95e-10 & 2.00
  \\\hline
 \end{tabular}
\end{center}
\caption{{\bf Order of accuracy in $\|\mh\|\rightarrow 0$:} evaluation of the
  numerical  error at fixed time $T=10$ of the numerical schemes \eqref{eq:RK2-1}--\eqref{eq:RK2-3}.}
\label{tab:02}
\end{table}

\subsection{Order of accuracy in $\ve$}
We again consider the transport equation
\eqref{eq:kin-1}--\eqref{eq:kin-2},  with the initial data 
\begin{equation}\label{eq:f0}
f_0(\mx,v,w) = \delta_{V_0(\mx)}(v)\,\otimes\,\delta_{W_0(\mx)}(w),
\end{equation}
with
$$
V_0 \,=\, \chi_{[-1,1]} \quad{\rm and} \quad W_0\,\equiv\,0.$$
In this configuration, we get $\rho_0\equiv 1$ and the solution of the
transport equation \eqref{eq:kin-1} is again a Dirac mass in $(v,w)$ centered in $(V^\ve,W^\ve)$,  solution {of} the nonlocal reaction-diffusion system for $t>0$ and $\mx\in\R^d$,
\begin{equation}
\left\{\begin{array}{l}
\partial_t V^\ve \,-\, \dfrac{1}{\ve^2}\left( \Psi_\ve\star V^\ve \,-\,\overline{\Psi}\,V^\ve \right) \,=\, N(V^\ve) \,-\, W^\ve, \\ \, \\
\partial_t W^\ve \,=\, A\left(V^\ve,W^\ve\right).
\end{array}\right.
\end{equation}

The purpose is now to study the
asymptotic when the scaling parameter $\ve$ goes to $0$. It is
expected that the macroscopic quantities $(V^\ve,W^\ve)$ converge
towards the solution {of} the reaction-diffusion FHN system \eqref{eq:macro}, which reads as follows when $\rho_0\equiv 1$, for $t>0$ and $\mx\in\R$,
\begin{equation}\label{eq:FHNreac}
\left\{\begin{array}{l}
\partial_t V \,-\, \overline{\sigma}\,\partial_\mx^2 V \,=\, N(V) \,-\, W, \\ \, \\
\partial_t W \,=\, \tau\,\left( V - \gamma\,W \right).
\end{array}\right.
\end{equation}

To investigate this asymptotic, we compute an approximation of the
relative entropy  given at any time $t>0$ as
\begin{equation}
  \label{def:L2}
\mathcal{D}_\ve(t)\,:=\,\left[\ds\int_\R\rho_0(\mx)\left[ \left| V^\ve(t,\mx) - V(t,\mx) \right|^2 \,+\, \left| W^\ve(t,\mx) - W(t,\mx) \right|^2\right]\,\md\mx \right]^{1/2},
\end{equation}
as $\ve$ goes to $0$.

Here again, we approach $\mathcal{D}_\ve(t)$ with a rectangle rule. In \cite{CRE}, it is proven that for any $t> 0$,  $\mathcal{D}_\ve(t)$ tends to $0$ as $\ve$ goes to $0$ with a rate of convergence larger than $2/7$. However, when $\rho_0\equiv 1$ and for compactly supported $f^\ve$, the rate of convergence is formally equal to $2$. 

Furthermore, since the solution {of} the transport equation is a Dirac mass in $(v,w)\in\R^2$,  we take $M=1$. Then, %the domain in space is taken to be $(-10,10)$, and 
we choose $\Delta t=0.01$ and $n_x=512$ for the time and space discretization.

\begin{figure}[ht!]
\begin{center}
  \begin{tabular}{ccc}
  \includegraphics[width=3.75cm]{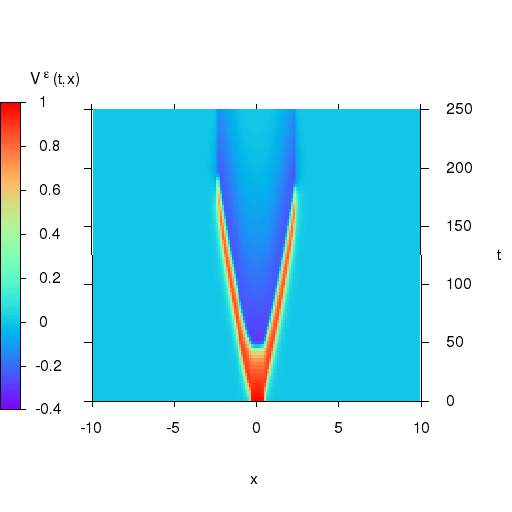}&
  \includegraphics[width=3.75cm]{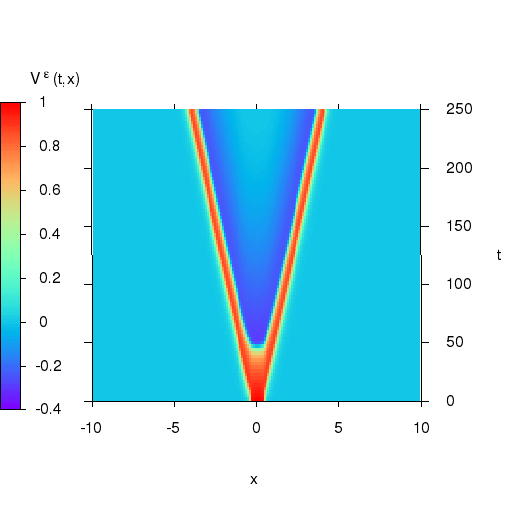}&
  \includegraphics[width=3.75cm]{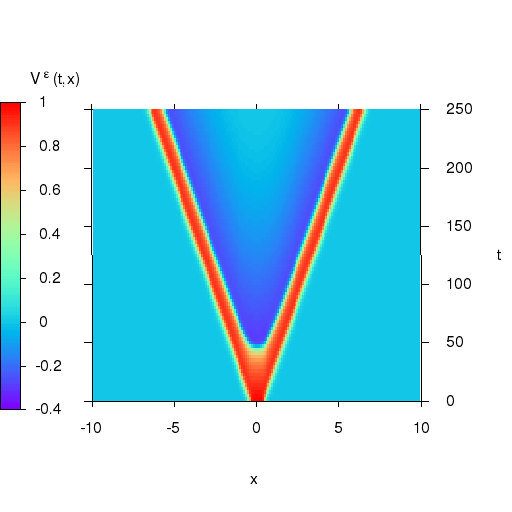}
    \\
    (a) $\ve=3.25$ & (b) $\ve=3$ & (c) $\ve=1$
\end{tabular}
  \end{center}
\caption{{\bf Order of accuracy in $\ve\rightarrow 0$:} spatio-temporal profile of
  $V^\ve(t,\mx)$ for (a) $\ve=3.25$,  (b) $\ve =3$ and (c) $\ve=1$.}
\label{fig:exci}
\end{figure}

\begin{table}
\begin{center}
\begin{tabular}{|c|c|c|}
  \hline\makebox[3em]{$\ve$}&\makebox[12em]{$\mathcal{D}_\ve(t)$ with \eqref{eq:RK1}--\eqref{eq:RK1J}} & \makebox[3em]{Order}
  \\\hline\hline
  5 & 1.21 &  \\ \hline
  2 & 1.73 & XX \\ \hline
  1 & 9.16e-01 & XX \\ \hline
  5.e-01 & 2.60e-01 & 1.82 \\ \hline
  2.e-01 & 4.17e-02 & 1.92 \\ \hline
  1.e-01 & 1.04e-02   & 1.95 \\ \hline
  5.e-02 & 2.60e-03  & 1.97 \\ \hline
  2.e-02 & 4.17e-04  & 1.98 \\ \hline
1.e-02 & 1.04e-04  & 1.98 \\ \hline
  5.e-03 & 2.62e-05  & 1.99 \\ \hline
  2.e-03 & 4.24e-06  & 1.99 \\ \hline
  1.e-03 & 8.65e-07  & 2.00 \\ \hline
 \end{tabular}
\end{center}
\caption{{\bf Order of accuracy in $\ve\rightarrow 0$:} approximation  of
  $\mathcal{D}_\ve(t)$  at
  fixed time $t=250$ with the first order
  scheme \eqref{eq:RK1}--\eqref{eq:RK1J}.}
\label{tab:21}
\end{table}

\begin{table}
 \begin{center}
  \begin{tabular}{|c|c|c|}
     \hline\makebox[3em]{$\ve$}&\makebox[12em]{$\mathcal{D}_\ve(t)$
                                 with \eqref{eq:RK2-1}--\eqref{eq:RK2-3}} & \makebox[3em]{Order}
  \\\hline\hline
   5 & 1.21 &  \\ \hline
  2 & 1.73 & XX \\ \hline
  1 & 9.13e-01 & XX \\ \hline
  5.e-01 & 2.59e-01 & 1.83 \\ \hline
  2.e-02 & 4.15e-02  & 1.93 \\ \hline
  1.e-01 & 1.04e-02  & 1.95 \\ \hline
  5.e-02 & 2.59e-03  & 1.97 \\ \hline
  2.e-02 & 4.15e-04  & 1.98 \\ \hline
    1.e-02 & 1.03e-04  & 1.98 \\ \hline
    5.e-03 & 2.59e-04  & 1.99 \\ \hline
  2.e-03 & 6.94e-05  & 1.99 \\ \hline
  1.e-03 & 1.74e-05  & 1.99 \\ \hline
   \end{tabular}
  \end{center}
\caption{{\bf Order of accuracy in $\ve\rightarrow 0$:} approximation  of
  $\mathcal{D}_\ve(t)$  at
  fixed time $t=250$ with the second order
  scheme \eqref{eq:RK2-1}--\eqref{eq:RK2-3}.}
\label{tab:22}
\end{table}

In Figure \ref{fig:exci}, we show the spatio-temporal profile of the mean membrane potential $V^\ve$ computed from $f^\ve$ the solution of the transport equation \eqref{eq:kin-1} for $\ve=3.25$ (panel (a)), $\ve=3$ (panel (b)) and $\ve=1$ (panel (c)). It shows that depending on the value of $\ve$, the solution $V^\ve$ presents dramatically different dynamics. If $\ve$ is too large compared to the width of the considered interval, as in the case (a), two symmetric waves start to propagate, but quickly disappear, and then $V^\ve$ converges to $0$ everywhere as time goes on. On the contrary, for smaller values of $\ve$ as in the cases (b) and (c), that is $\ve\leq 3$, the function $V^\ve$ has the shape of two symmetric counter-propagating traveling pulses. This is typically the kind of slow/fast dynamics expected for the solution of \eqref{eq:FHNreac} according to \cite{CAR} with this set of parameters. Moreover, it seems that the speed of propagation of these waves decreases as $\ve$ grows, since in the case (b), the speed of propagation of these traveling pulses is slightly less than in the case (c). 

Then, we display in Table \ref{tab:21} and \ref{tab:22},  the numerical approximations of $\mathcal{D}_\ve(t)$ at fixed time $t=250$ for several values of $\ve$ for the first order (left table) and the second order (right table) numerical schemes. Since {the behavior of $V^\ve$ is too different from its limit for smaller values of $\ve$%the function $V^\ve$ does not present some traveling pulses only for $\ve=5$
, we display linear regressions only from the line corresponding to
$\ve=1$.} These linear regressions yield that $\mathcal{D}_\ve(t)$
seems to be approximately of order two in $\ve$ for both numerical
schemes, which corresponds to the one obtained by formal computations
for the continuous problem \cite{CRE}.

{Notice that the second order numerical scheme
  \eqref{eq:RK2-1}--\eqref{eq:RK2-3} represents a negligible
  improvement for the speed of convergence of $\mathcal{D}_\ve(t)$ as
  $\ve$ goes to $0$. A key issue  in numerical analysis is to perform a similar study on
  the discrete solution as the one we performed on the continuous
  problem \cite{CRE} in order to establish the asymptotic preserving
  property of the scheme.}

\subsection{Heterogeneous neuron density}
In the spirit of \cite{BRE01,BRE12}, the study of propagating waves in
neural networks with spatial heterogeneities seems to be a fruitful
topic. This subsection is therefore devoted to the illustration of the
behaviour of the solution {of} the numerical scheme
\eqref{eq:RK1}--\eqref{eq:RK1J} with a non constant neuron density
function $\rho_0$.
We choose the initial datum 
$$
f_0(\mx,v,w) = \rho_0(\mx) \, \chi_{A}\left(\frac{v-V_0(\mx)}{10}\right)\, \chi_{A}\left(\frac{w-W_0(\mx)}{100}\right),
$$
with $A=(-1/2,\, 1/2)$ where the density $\rho_0$ is a smooth
approximation of $1-\chi_{\mathcal{B}(0,6)}$ and  $(V_0,W_0)$ is chosen as
\begin{equation}
  \label{ini:hetero}
  V_{0}(\mx)\,=\,
  \left\{\begin{array}{ll}
           1 \quad& \text{if } x_1\in (-14,-13),
           \\
           0 &\text{else},
         \end{array}\right.
       \quad
       W_{0}(\mx)\,=\,
       \left\{
         \begin{array}{ll}
           0.1 \quad&\text{if } x_2\leq -14,
           \\
           0 & \text{else}.
         \end{array}\right.
     \end{equation}
The domain in space is taken to be $(-15,
15)^2$, discretized using $n_x = 512$ points in each spatial
coordinate and $M=50$ particles per cell. It is expected that a wave
will propagate initially from the left hand side in the homogeneous
density of neurons. Then in the center of the domain, the density
becomes inhomogeneous, which will perturb the wave propagation
front. In Figure \ref{fig:hetero1}, we propose different scenario depending on the scaling parameter $\ve >0$. We display the profile of the solution $V^\ve$ at time $t=300$, $500$ and $700$ for $\ve=5$, $2$ and $10^{-2}$. Clearly, the amplitude of the scaling parameter $\ve>0$ has an influence on the shape of the pulse but also on the speed of propagation. 

First of all, the scrolling wave does not propagate through the ball $\mathcal{B}(0,6)$, since the neuron density is too weak. Then, we can observe that as $\ve$ grows small, the speed of propagation and the width of the scroll wave increase. Thus, the heterogeneity does not have exactly the same effect. For $\ve=5$ and $\ve=2$ for example, the width of the gap in the neuron density is too large compared to the width of the traveling pulse. Therefore, the scroll wave breaks at its middle, and then recomposes once the heterogeneity is passed. Then, for smaller values of $\ve$, as $\ve=0.01$, the traveling pulse starts to wrap the area where it cannot propagate before breaking and recomposing.
\begin{figure}[ht!]
\begin{center}
  \begin{tabular}{ccc}
    \includegraphics[width=3.75cm]{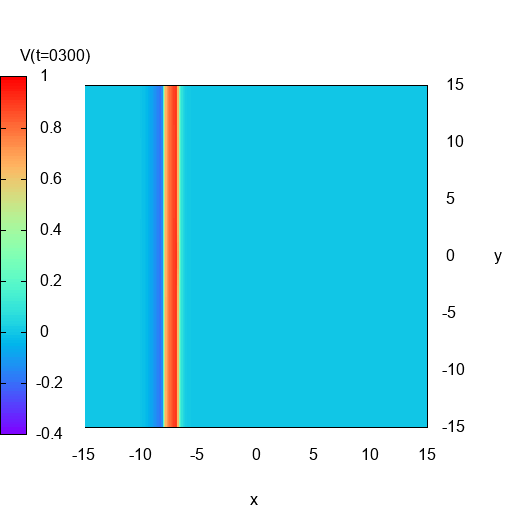} &
    \includegraphics[width=3.75cm]{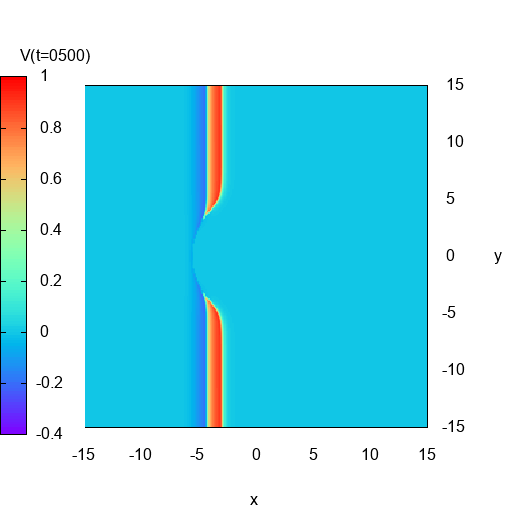}&
    \includegraphics[width=3.75cm]{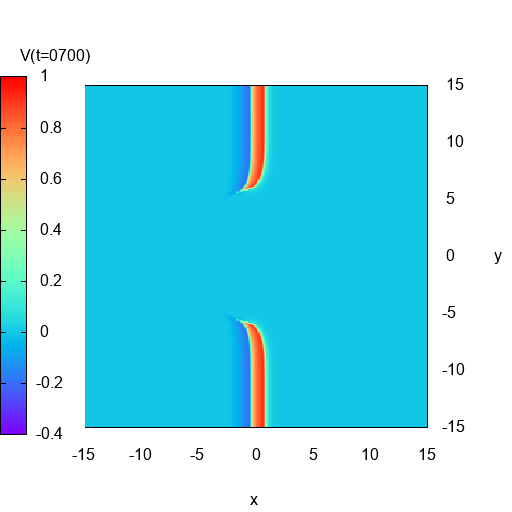} 
    \\
    \includegraphics[width=3.75cm]{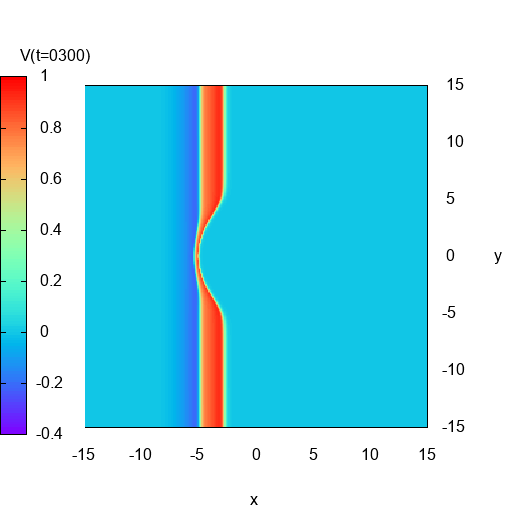} &
    \includegraphics[width=3.75cm]{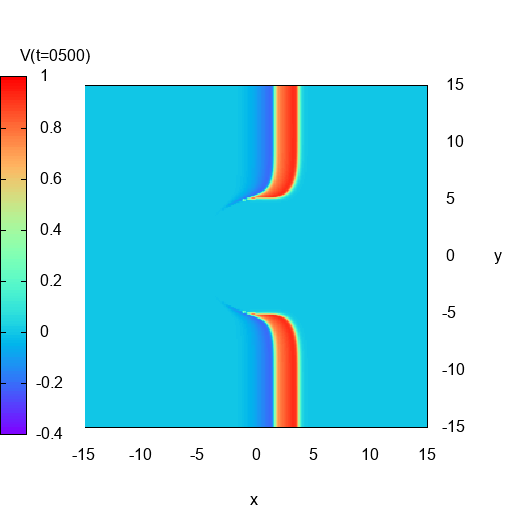}&
   \includegraphics[width=3.75cm]{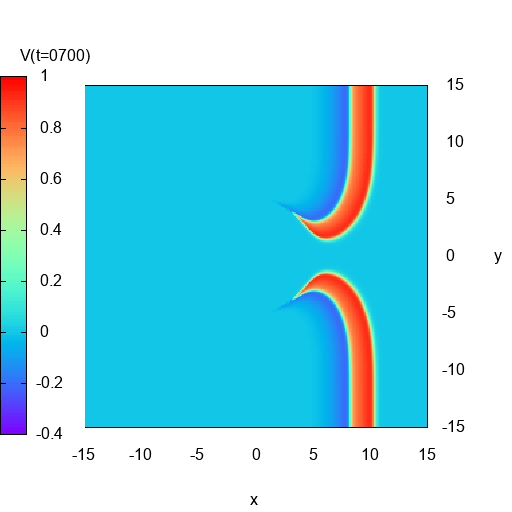} 
    \\
    \includegraphics[width=3.75cm]{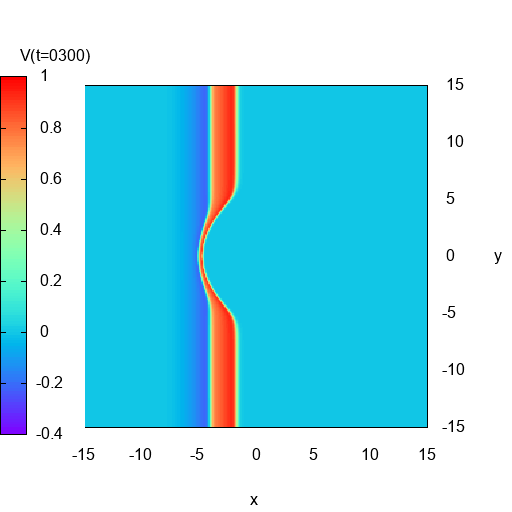} &
    \includegraphics[width=3.75cm]{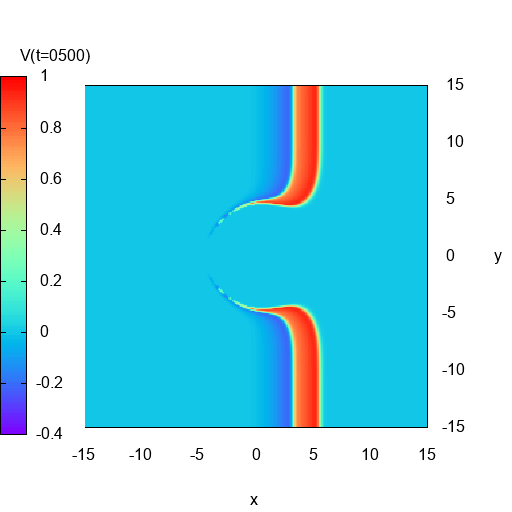}&
    \includegraphics[width=3.75cm]{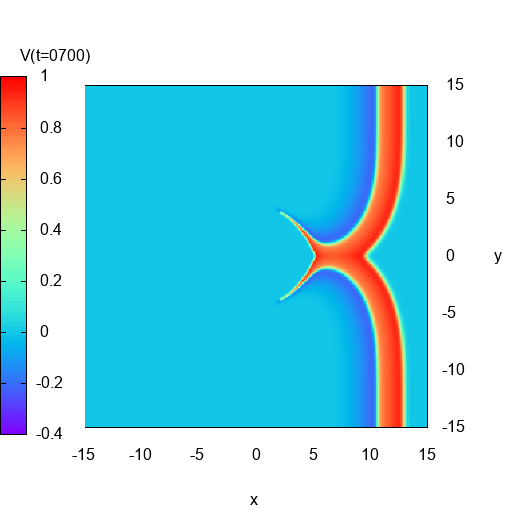} 
    \\
    (a) $t=300$ & (b) $t =500$  &(c) $t= 700$ 
  \end{tabular}
  \end{center}
  \caption{{\bf Heterogeneous neuron density} :  plot of the solution
  ${V}^{\ve}$ at different  time
  $t=300$, $500$ and $700$ for $\ve=5$ (top), $\ve=2$ (middle) and $\ve=10^{-2}$ (bottom).}
\label{fig:hetero1}
\end{figure}

\subsection{Rotating spiral waves}
A spiral wave in the broadest sense is a rotating wave traveling
outward from a center. Such spiral waves have been observed in many  biological
systems \cite{Winfree2001}, \cite{Murray2003}, such as mammalian cerebral cortex \cite{HUA}. Although circular
waves were predicted from early models of cortical activity
\cite{Beurle1956}, true spiral wave formation has been already
obtained  in numerical simulations of  reaction-diffusion systems such
as the Wilson–Cowan system
\cite{WilsonCowan1972,BUE}.

In this section, we present numerical evidence for stable spiral waves
considering the transport equation \eqref{eq:kin-1}--\eqref{eq:kin-2}.  We choose  the initial datum \cite{BUE}
$$
f_0(\mx,v,w) = \rho_0(\mx) \, \chi_{A}\left(\frac{v-V_0(\mx)}{10}\right)\, \chi_{A}\left(\frac{w-W_0(\mx)}{100}\right),
$$
with $A=(-1/2,\, 1/2)$ where the density $\rho_0$ is a smooth
approximation of the characteristic function on the disk centered in
$0$ with radius $12$, whereas $(V_0,W_0)$ is chosen as
\begin{equation}
  \label{ini:spiral}
  V_{0}(\mx)\,=\,
  \left\{\begin{array}{ll}
           1 \quad& \text{if } x_1\leq -6 \text{ and } x_2\in  (0,3),
           \\
           0 &\text{else},
         \end{array}\right.
       \quad
       W_{0}(\mx)\,=\,
       \left\{
         \begin{array}{ll}
           0.1 \quad&\text{if } x_2\geq 3,
           \\
           0 & \text{else}.
         \end{array}\right.
     \end{equation}
     Here the trivial state $(V, W) = (0, 0)$ is perturbed by setting the
lower-left quarter of the domain to $V = 1$ and the upper half part to $W= 0.1$, which allows the initial condition to
curve and rotate clockwise generating the spiral pattern. The domain in space is taken to be $(-15,15)^2$, discretized using $n_x = 512$ points in each spatial
coordinate and $M=50$ particles per cell.

\begin{figure}[ht!]
\begin{center}
  \begin{tabular}{cc}
      \includegraphics[width=6.cm]{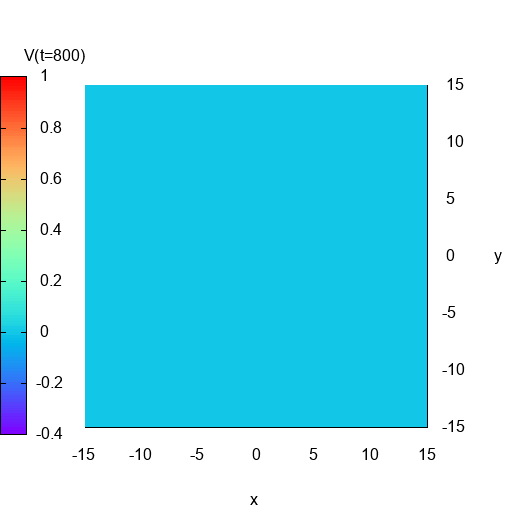}&
    \includegraphics[width=6.cm]{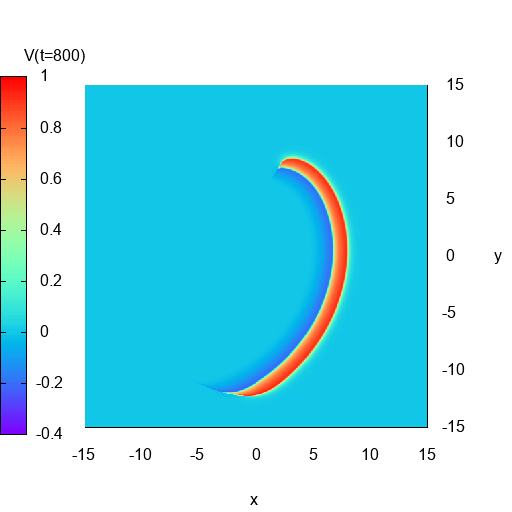} \\
 (a) $\ve =6$  &(c) $\ve = 5$ \\
    \includegraphics[width=6.cm]{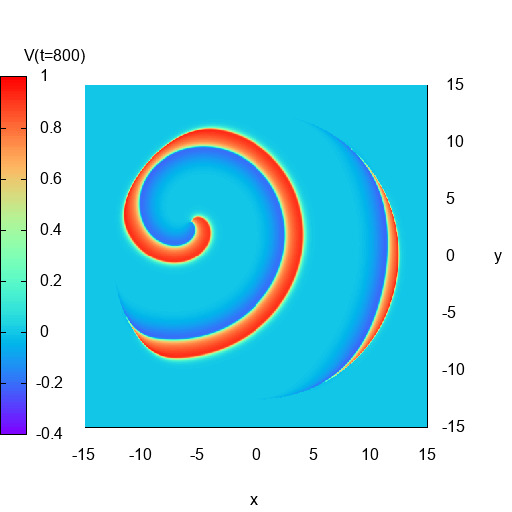}&
    \includegraphics[width=6.cm]{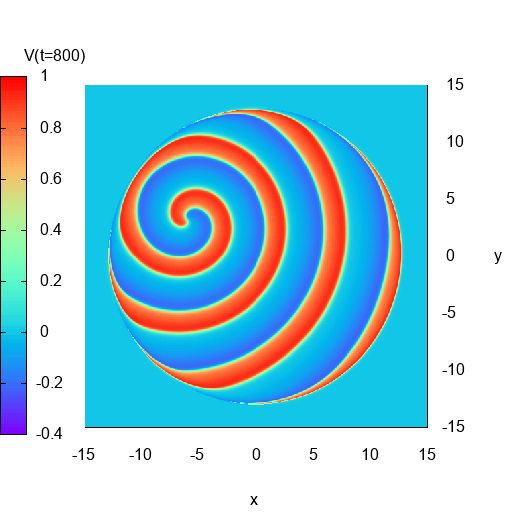} \\
 (c) $\ve=4$ & (d) $\ve =2$
  \end{tabular}
  \end{center}
  \caption{{\bf Rotating spiral waves} :  plot of the solution
  ${V}^{\ve}$ at  time $t=800$ for different
  values of $\ve>0$.}
\label{fig:spiral1}
\end{figure}

We first perform several computations changing the value of the
scaling parameter and report  in Figure \ref{fig:spiral1},  the profile of the numerical
solution $V^{\ve}$ obtained using the second order scheme \eqref{eq:RK2-1}--\eqref{eq:RK2-2}  at the
final time of the simulation $t=800$.  On the one hand, when  $\ve\geq
6$, we observe that  the initial wave first propagates into the
domain, then it is damped and the solution converges to the stable
steady state $(V,W)=(0,0)$ when times goes on (see Figure
\ref{fig:spiral1} (a) at time $t=800$).  On the other hand,
when $\ve$ becomes smaller  $\ve\in (4,6)$, the solution evolves in a different
manner. Indeed, the initial wave propagates into the physical domain where $\rho_0>0$, and  a spiral wave appears at time $t\simeq 20$,
where a traveling pulse emerges and propagates from the bottom left quarter of the domain, towards the
bottom right quarter, which creates a rotating spiral wave at larger
time. For these values of $\ve$, the shape of the solution is very
sensitive to $\ve$  (see for instance  $(b)$ and $(c)$ in Figure
\ref{fig:spiral1}). Finally, when $\ve \leq 4$, a spiral wave appears
and it seems that the solution is not anymore sensitive to $\ve$. 

\begin{figure}[ht!]
\begin{center}
  \begin{tabular}{ccc}
    \includegraphics[width=3.75cm]{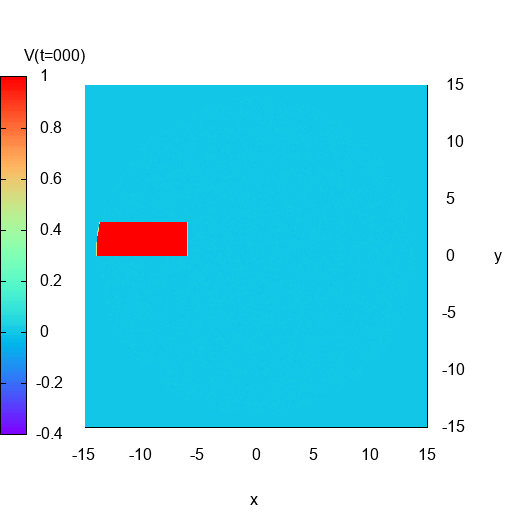} &
    \includegraphics[width=3.75cm]{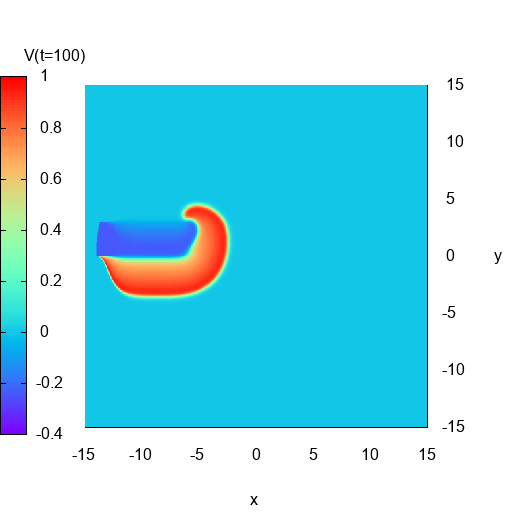}&
    \includegraphics[width=3.75cm]{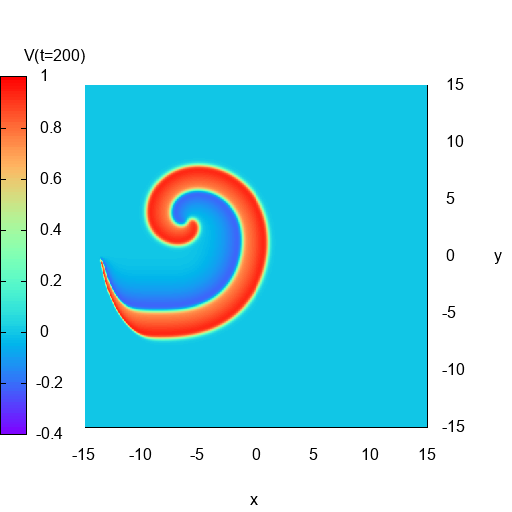} 
    \\
    (a) $t=0$ & (b) $t =100$  &(c) $t= 200$
    \\
     \includegraphics[width=3.75cm]{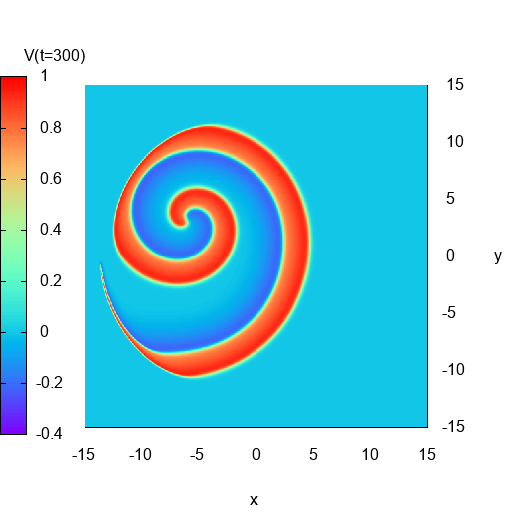} &
    \includegraphics[width=3.75cm]{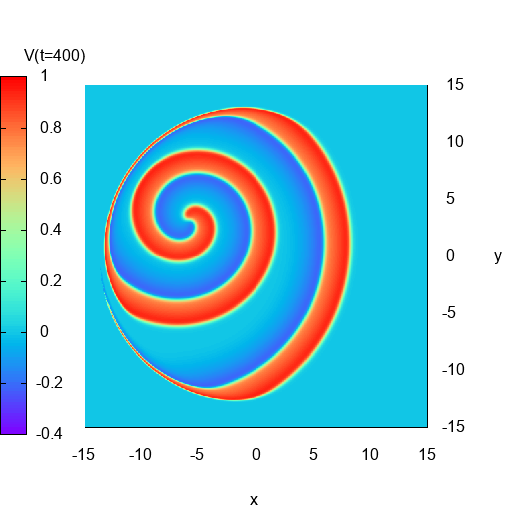}&
    \includegraphics[width=3.75cm]{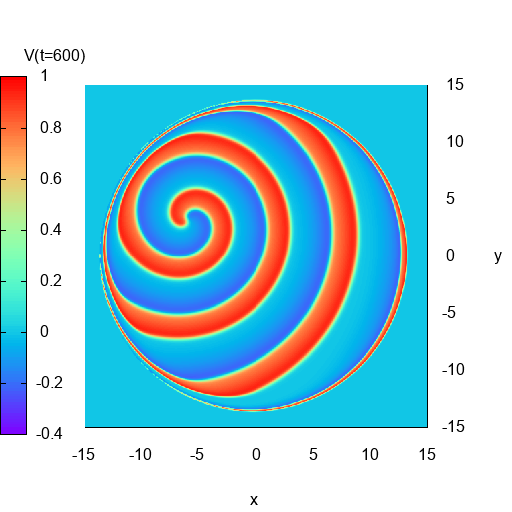} 
    \\
(d) $t=300$ & (e) $t =400$  &(f) $t= 600$
  \end{tabular}
  \end{center}
  \caption{{\bf Rotating spiral waves} :  plot of the solution
  ${V}^{\ve}$ for $\ve=0.5$ at different  time
  $t\in [0,600]$.}
\label{fig:spiral2}
\end{figure}

In Figure \ref{fig:spiral2}, we report the numerical results for $\ve=0.5$ at different time $t\in (0,600)$. It illustrates how the spiral wave is generated from the initial data: a traveling pulse appears and begins to rotate clockwise, while the waves propagate up to the edge of the region where $\rho_0>0$. Moreover, it seems that once the spiral wave has appeared, its speed of rotation remains constant (see in (e) and (f) in Figure \ref{fig:spiral2}). Furthermore, in Figure \ref{fig:spiral3}, we report a zoom in the region where the traveling pulse appears. We observe that the center of the spiral moves and oscillates around a point. Finally in Figure \ref{fig:spiral4}, we propose the time evolution of the solution $V^\ve$ at different points $\mx=(-6,3)$,  $\mx=(-8,4)$ and $\mx=(-8,2)$. Close to the point $\mx=(-6,3)$, around which the spiral oscillates, time oscillations appear with an amplitude between $-0.1$ and $0.6$ whereas in the neighboring points, different oscillations appear with a larger amplitude. Observe that at $\mx=(-8,4)$ and $\mx=(-8,2)$, the time oscillations look the same but are shifted.

\begin{figure}[ht!]
\begin{center}
  \begin{tabular}{cc}
    \includegraphics[width=6.cm]{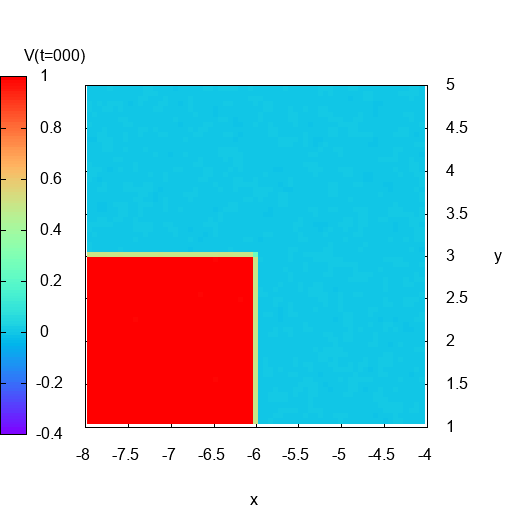} &
                                             \includegraphics[width=6.cm]{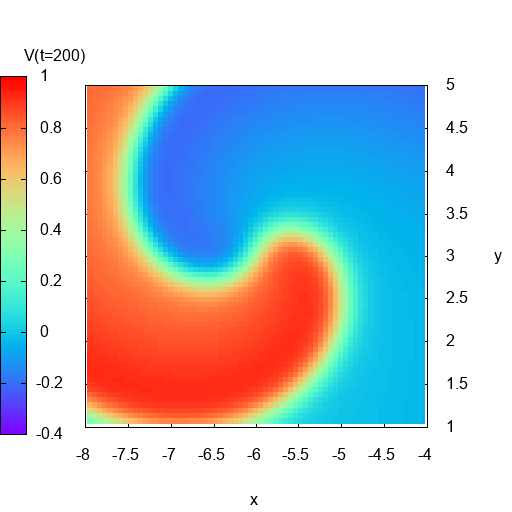}\\
    (a) $t=0$ & (b) $t =200$ \\
     \includegraphics[width=6.cm]{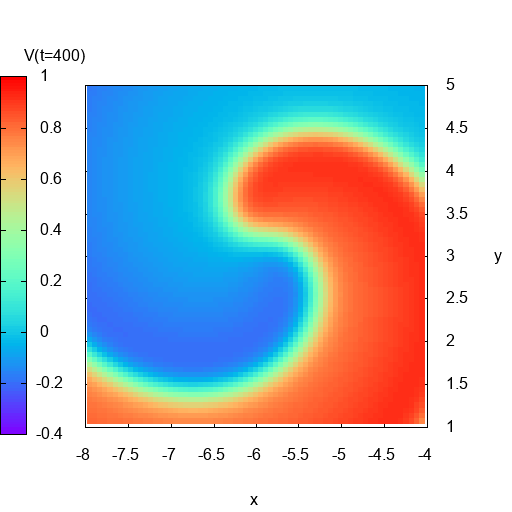} &
     \includegraphics[width=6.cm]{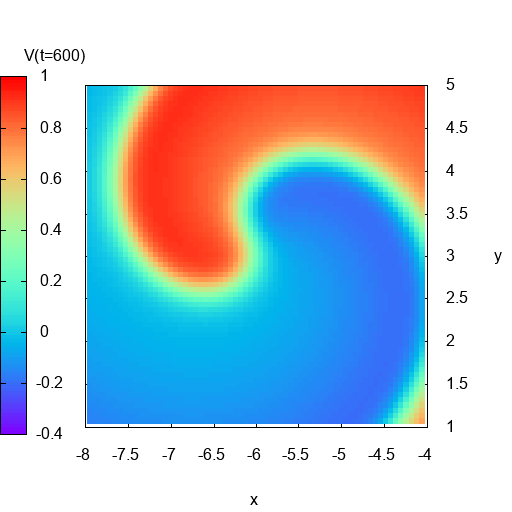} 
    \\
    (c) $t= 400$ &(d) $t= 600$
 \end{tabular}
  \end{center}
  \caption{{\bf Rotating spiral waves} :  zoom on the solution
  $V^{\ve}$ for $\ve=0.5$ at different  time
  $t\in [0,400]$ around the point where the traveling pulse emerges.}
\label{fig:spiral3}
\end{figure}

\begin{figure}[ht!]
\begin{center}
  \begin{tabular}{ccc}                     
     \includegraphics[width=3.75cm]{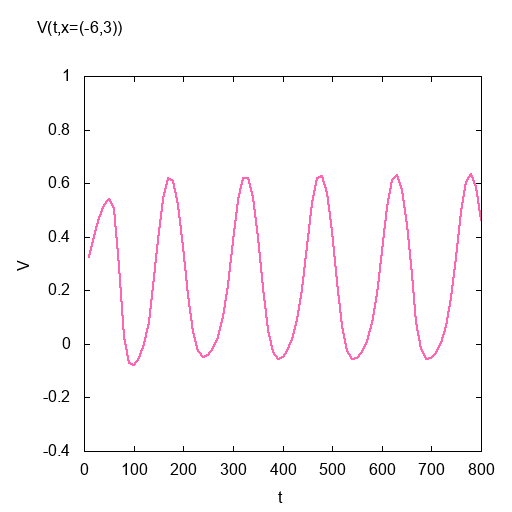} &
    \includegraphics[width=3.75cm]{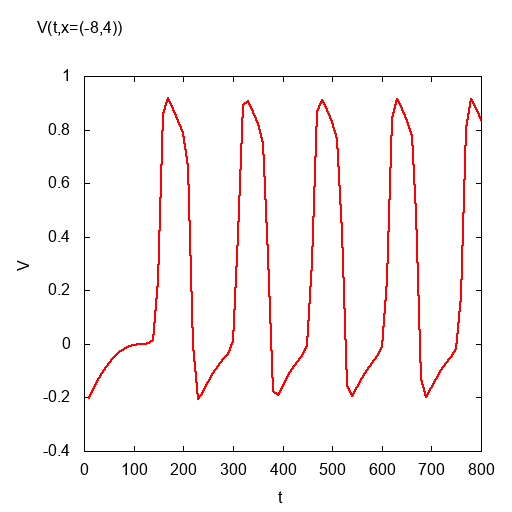}&
    \includegraphics[width=3.75cm]{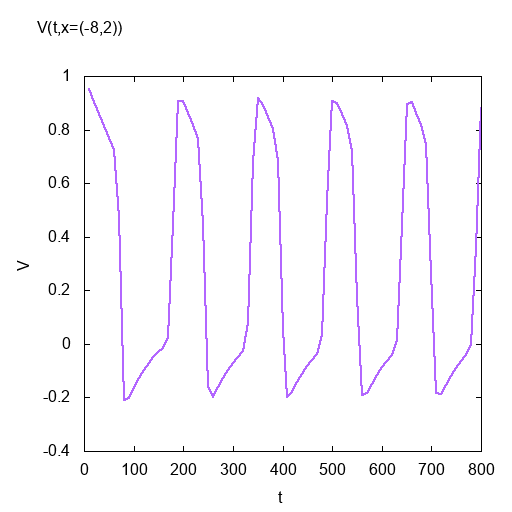} 
    \\
(a) $\mx=(-6,3)$ & (b) $\mx=(-8,4)$  &(c) $\mx=(-8,2)$
  \end{tabular}
  \end{center}
  \caption{{\bf Rotating spiral waves} :  time evolution of the solution
  $V^{\ve}$ for $\ve=0.5$ at different  points
  around the location where the traveling pulse emerges.}
\label{fig:spiral4}
\end{figure}

\section{Conclusion}
\label{sec:conclu}
\setcounter{equation}{0}

In the present paper we have proposed a class of semi-implicit time discretization techniques for particle simulations to  \eqref{eq:kin-1}--\eqref{eq:kin-2} coupled with a spectral collocation method for the space discretization. The main feature of our approach is to guarantee the accuracy and stability on slow scale variables even when the amplitude of local interactions becomes large, thus allowing a capture of the correct behavior  with a large time step with respect to $\ve>0$. Even on large time simulations the obtained numerical schemes also provide an acceptable accuracy on the membrane potential  when $\ve\ll1$, whereas fast scales are automatically filtered when the time step is large compared to $\ve^2$.

As a theoretical validation we have proved that under some stability
assumptions on numerical approximations, the slow part of the
approximation converges when $\ve\rightarrow 0$ to the solution of a
limiting scheme for the asymptotic evolution, that preserves the
initial order of accuracy. Yet a full proof of uniform accuracy
remains to be carried out in the frame of the continuous case
\cite{CRE}. {The main challenge is to rigorously study the
  stability of the numerical solution for an appropriate norm and to
  find a bound uniformly with respect to $\ve$ and $\mh$. }

\section*{Acknowledgements}
The authors acknowledge support from ANITI (Artificial and Natural
Intelligence Toulouse Institute) Research Chair and the project
ChaMaNe (ANR-19-CE40-0024).

\bibliography{plain}

\end{document}